\DeclareMathOperator*{\dist}{dist}
\newcommand{\teb}{\mathrm{b}}
\newcommand{\tf}{\tilde{f}}
\newcommand{\bareta}{\bar{\eta}}
\newcommand{\basis}{\mathscr{B}}
\newcommand{\rade}{\mathrm{rad,e}}
\newcommand{\rad}{\mathrm{rad}}
\newcommand{\order}{\mathcal{O}}
\newcommand{\torus}{\mathbb{T}}
\newcounter{thmcounter}
\numberwithin{equation}{section}
\numberwithin{thmcounter}{section}
\theoremstyle{definition}
\theoremstyle{plain}
\newtheorem{lemma}[thmcounter]{Lemma}
\newtheorem{thm}[thmcounter]{Theorem}
\newtheorem{prop}[thmcounter]{Proposition}
\theoremstyle{remark}
\theoremstyle{definition}
\newtheorem{rmkn}[thmcounter]{Remark}
\newcommand{\RS}{\R^{N-1}\times S}
\newcommand{\R}{{\mathbb R}}
\newcommand{\N}{{\mathbb N}}
\newcommand{\Z}{{\mathbb Z}}
\newcommand{\T}{{\mathbb T}}
 \newcommand{\cI}{{\mathcal I}}
\newcommand{\al}{\alpha}
\newcommand{\be}{\beta}
\newcommand{\de}{\delta}
\newcommand{\ep}{\epsilon}
\newcommand{\De}{\Delta}
\newcommand{\om}{\omega}
\newcommand{\Om}{\Omega}
\newcommand{\la}{\lambda}
\newcommand{\ka}{\kappa}
\newcommand{\rest}{\big\arrowvert}
\newcommand{\downto}{\searrow}
\title{The existence of partially localized periodic-quasiperiodic
  solutions and related KAM-type results for elliptic equations on the
  entire space}  
\author{Peter Pol\'{a}\v{c}ik\footnote{Supported in part by the NSF
    Grant DMS--1856491} \\
{\small School of Mathematics}\\
{\small University of Minnesota}\\
{\small Minneapolis, MN 55455}\\
{}\\
{\small and}\\  
 \\
Dar\'{i}o A.\
  Valdebenito\\
{\small Department of Mathematics}\\
{\small University of Tennessee, Knoxville}\\
{\small Knoxville, TN 37996}\\
}
\begin{document}
\maketitle

\begin{abstract} 
We consider the equation
\begin{equation}\tag{1}
\Delta_x u+u_{yy}+f(u)=0,\quad x=(x_1,\dots,x_N)\in\R^N,\  y\in \R,
\end{equation}
where $N\geq 2$ and $f$ is a sufficiently
smooth function satisfying $f(0)=0$,
$f'(0)<0$, and some natural additional conditions.
We prove that equation (1) possesses uncountably many positive solutions
(disregarding translations) which are
radially symmetric in $x'=(x_1,\dots,x_{N-1})$
and decaying as $|x'|\to\infty$, periodic in $x_N$, and
quasiperiodic in $y$. Related theorems for more general
equations are included in our analysis as well. 
Our method is based on center  manifold and KAM-type results. 
\end{abstract}

{\emph{Key words}:} Elliptic equations, entire solutions, 
quasiperiodic solutions, partially localized solutions, center
manifold, KAM theorems. 

{\emph{AMS Classification:}   35J61, 35B08, 35B09, 35B10, 35B15. 

\tableofcontents
\section{Introduction}\label{sec:intro}

We consider the semilinear elliptic equation
\begin{equation}\label{eq:1}
\Delta u+u_{yy}+f(u)=0,\quad (x,y)\in\R^N\times \R,
\end{equation}
where  $N\geq 2$ and $f:\R\to\R$ is a $C^k$ function, $k\ge 1$, satisfying
\begin{equation}\label{eq:fcond}
f(0)=0,\ f'(0)<0.
\end{equation}
We generally use the symbol  $\Delta$ for the Laplace operator in the
variables $x=(x_1,\dots,x_N)$, sometimes, when indicated,
only with respect to some of these variables.
We are particularly interested in the more specific equation
\begin{equation}\label{eq:1p}
\Delta u+u_{yy}-u+u^p=0,\quad (x,y)\in\R^N\times \R,
\end{equation}
with  $p>1$.

Equations of the above form, frequently referred to as nonlinear
scalar field equations, have been extensively studied from several
points of view. 
Nonnegative solutions, which we focus on in this paper, are
often the only meaningful solutions from the modeling
viewpoint---thinking of population densities, for example---and also  
they are the only relevant solutions, playing the role of
steady  states, in the dynamics
of the nonlinear heat equation  $u_t=\Delta u+u_{yy}+f(u)$  
with positive initial data. In other applications---for example,
solitary waves or stationary states of nonlinear Klein-Gordon and
Schr\"odinger equations \cite{Berestycki-L:existence}---finite energy
solutions are more relevant.   

Best understood among positive solutions of \eqref{eq:1} are the 
solutions which are (fully) localized in the sense that
they decay to $0$ in all variables $x, y$.
A classical result of  \cite{Gidas-N-N:unbd}  says that 
such solutions are radially symmetric and radially decreasing
with respect to some center in $\R^{N+1}$.
For a large class of nonlinearities, including the
nonlinearity in \eqref{eq:1p}, it is also known that 
the localized positive solution is unique, up to translations, see
\cite{Chen-L:uni,Cortazar-E-F,Kwong,Kwong-L,Peletier-S:uniq1,Serrin-T}. 
For general  results  on the existence and nonexistence of  localized
positive solutions of \eqref{eq:1} we refer the reader to
\cite{Berestycki-L:existence}. We note that, by Pohozaev's identity,
equation \eqref{eq:1p} belongs to the existence class if and only if
$p<(N+3)/(N-1)$ \cite{Berestycki-L:existence, Pohozaev}.

If no decay constraints are imposed, a  variety 
of positive solutions with rather complex structure
is known to exist, including saddle-shaped and     
multiple-end solutions \cite{Cabre-T:saddle,
  Dang-F-P:saddle, delPino-K-P-W2,
  delPino-K-P-W, Kowalczyk-L-P-W} or solutions with 
infinitely many bumps and/or  fronts
(transitions) formed along some directions
\cite{Malchiodi:new, Santra-W}.
Such a diverse set of solutions is hardly amenable to any general
classification or description. 
One then naturally tries to understand various smaller classes of
solutions characterized by some specific symmetry, periodicity, or decay
properties. Similarly as in our previous work,
\cite{p-Valdebenito:hom}, in the present paper we are concerned with
solutions with some predetermined structure with respect to the variables 
$x=(x_1,\dots,x_N)$, that is, all but one variable $y$.
One can think of solutions which are periodic in $x_1,\dots,x_N$,
localized in $x_1,\dots,x_N$, or a combination of these two
structures.
The basic question then is: What can be said about the behavior of
such solutions in the remaining variable $y$?

There is vast literature
on solutions which are periodic in all $x$-variables
and in the remaining  variable $y$ they exhibit 
one or multiple  homoclinic or
 heteroclinic transitions between periodic solutions (see
 \cite{Montecchiari-R, Rabinowitz:reversible} and references therein;
 for related studies of solutions with symmetries instead of the periodicity
 in the $x$ variables see \cite{Alessio-M:mutliplicity} and references therein).

 There is also a number of results concerning
 positive solutions $u$ localized in all of the $x$-variables:
 \begin{equation}
  \label{eq:dec}
  \lim_{|x|\to\infty}\sup_{y\in\R}u(x,y)=0.
\end{equation}
Any such solution is likely radially symmetric
in $x$ about some center in $\R^N$, cp.~\cite{Busca-F, Farina-M-R,
  Gui-M-X},  although this  
has not been proved in the full generality yet. As for the behavior in
$y$, solutions that are periodic (and nonconstant) in $y$ were first found in 
\cite{Dancer:new} and later, by different methods, in
\cite{Alessio-M:energy,Malchiodi:new}. This has been done for a
large class of nonlinearities $f$, including $f(u)=-u+u^p$
with suitable $p>1$. (There is much more to the results in 
\cite{Alessio-M:energy,Dancer:new, Malchiodi:new} than the existence
of periodic solutions; for example, certain global branches of such
solutions where found in \cite{Alessio-M:energy,Dancer:new}). 
In \cite{p-Valdebenito:hom}, we addressed the question whether
positive solutions which are quasiperiodic (and not periodic) in $y$
and satisfy \eqref{eq:dec} exist. We proved that this is indeed the
case if $N\ge 2$ and the nonlinearity $f$ is chosen suitably. 
For a reason that we explain
below, the method used in \cite{p-Valdebenito:hom}
is not applicable in some important specific  equations,
such as \eqref{eq:1p}. The existence of $y$-quasiperiodic solutions satisfying
\eqref{eq:dec} 
for such equations is an open problem which we 
find very interesting, but will not address here.

The structure of solutions that we  examine
in this paper is ``midway'' between full
periodicity and full decay in $x$: the solutions are periodic
 in some of the $x$-variables
and  decay in all the others (this is why we need to assume $N\ge 2$).
For definiteness and simplicity of the exposition,
we specifically postulate the following condition on $u$: writing
$x'=(x_1,\dots,x_{N-1})$, 
 \begin{equation}
  \label{eq:decper}
  \lim_{|x'|\to\infty}\sup_{x_N,y\in\R}u(x',x_N,y)=0,\quad u\text{ is
    periodic in $x_N$,}
\end{equation}
that is, there is  just 1
periodicity variable. Other splits between the
decay and periodicity variables can be treated by our method
in a similar way.

We are mainly concerned with the existence of positive solutions
satisfying \eqref{eq:decper}
which are quasiperiodic in  $y$.  We prove
the existence of such solutions for a  fairly general class of equations.
Our conditions on $f$
require, in addition to \eqref{eq:fcond} and sufficient smoothness,
that the $(N-1)$-dimensional problem
  \begin{equation}\label{eq:gsb}
\Delta u+f(u)=0,\quad x'\in\R^{N-1},
\end{equation}
possesses a {ground state}
which is nondegenerate and has Morse index 1. Let us recall the meaning of
these concepts. By a \emph{ground state} of \eqref{eq:gsb}
we mean a  positive fully
localized solution of \eqref{eq:gsb}. 
From \cite{Gidas-N-N:unbd} we know that any ground state
$u^*$ of \eqref{eq:gsb} is radially symmetric, possibly after a shift
in $\R^{N-1}$, so  we can write $u^*=u^*(r)$, $r=|x'|$.
Consider now the Schr\"{o}dinger operator
$A(u^*)=-\Delta -f'(u^*(r))$, viewed as a self-adjoint operator
 on $L^2_\rad(\R^{N-1})$, the space consisting of all radial
 $L^2(\R^{N-1})$-functions. Its  domain  is 
 $H^2(\R^{N-1})\cap L^2_\rad(\R^{N-1})$.
 Since the potential $f'(u^*(r))$ has the limit $f'(u^*(\infty))=f'(0)<0$,
  the essential spectrum of $A(u^*)$
is contained in  $[-f'(0),\infty)$ (cp.~\cite{Reed-S:IV}).
So
the condition $f'(0)<0$ 
implies that the spectrum in $(-\infty,0]$ 
consists of a finite number of isolated eigenvalues; these eigenvalues
are all simple due to the radial 
symmetry. We say that the ground
state $u^*$ is \emph{nondegenerate} if $0$ is not an eigenvalue of $A(u^*)$.
The \emph{Morse index} of
$u^*$ is defined as the number of negative eigenvalues of $A(u^*)$. 
By a well known instability result,
the Morse index of any ground state is always at least one.

The two conditions, the nondegeneracy and the Morse index equal to
$1$, are  usually satisfied in equations which
have a unique ground state, up to translations (see
\cite{Chen-L:uni,Cortazar-E-F,Kwong,Kwong-L,Peletier-S:uniq1,Serrin-T}).
A typical example
is equation \eqref{eq:gsb} with $f(u)=-u+u^p$ if
$p>1$ is Sobolev-subcritical in dimension $N-1$:
\begin{equation*}
  p<(N+1)/(N-3)_+=
    \begin{cases}
     \  (N+1)/(N-3) & \text{if $N>3$},\\
      \ \infty & \text{if $N\in\{2, 3\}$}.
    \end{cases}
\end{equation*} 
The subcriticality condition is
necessary and sufficient for the existence of a ground state of
\eqref{eq:gsb}, see  
\cite{Berestycki-L:existence}. The uniqueness and the other stated
properties of the ground state are proved in \cite{Kwong}.
Thus our result applies to equation
\eqref{eq:1p} in the subcritical case whenever $f(u)=-u+u^p$
meets our regularity requirement,
which is the case if $p$ is an
integer or if it is large enough.
If $N=2$, the ground state of the one-dimensional problem
\eqref{eq:gsb} is nondegenerate, if it exists, and has Morse
index 1 for any $f$ satisfying \eqref{eq:fcond}.
For $N>2$ and general nonlinearities satisfying \eqref{eq:fcond},
if  ground states on $\R^{N-1}$ exist,
it is not necessarily true that all of them have 
Morse index 1 (see
\cite{Dancer:uniqueness-sing,Davila-dP-G,P:morse}).
However,
under rather general conditions on $f$,
one can find a ground state with this property
as a mountain-pass critical point of the associated energy functional (see
\cite{Dancer:uniqueness-sing,Jeanjean-T}). The nondegeneracy condition
is not guaranteed in general either,
but it is not difficult to show that it holds
``generically'' with respect to $f$ (cp.~\cite[Section
4]{Dancer:new}).

As in \cite{p-Valdebenito:hom},
our method of proving the existence of quasiperiodic solutions
has its grounding in our earlier work
\cite{p-Valdebenito,p-Valdebenito:quadratic}.
It builds on spatial dynamics and  
center manifold techniques for elliptic equations (see
\cite{Kirchgassner}
for the origins of this method,
and, for example, \cite{Calsina-M-SM, DlLlave:center,
  Fiedler-S,Fiedler-Sc:survey,Groves-W, 
Haragus-I:bk,
Mielke:bk,Mielke:grad,Peterhof-S-S,P:common,Vander-I}  
and references therein for further developments) and  
KAM-type results   in a finite-differentiability 
setting.  We remark that related results can be found in
\cite{Scheurle:strip, Valls:quasi}, where quasiperiodic solutions
for elliptic equations on the strip in $\R^2$ have been found.
The center manifold techniques allow us to relate a class of
solutions of the elliptic problem to solutions of a
finite-dimensional Hamiltonian system,
where the variable $y$ plays the role of time.
This is an important step before an application of KAM results, as
the original elliptic equation itself is not a well-posed
evolution problem when  $y$ is viewed as time. Different
approaches to  partial differential
equations which are ill-posed, from the KAM perspective, can be found
in \cite{DlLlave-Sire:KAM, Scheurle:strip}.

In general terms, 
our method consists in the following.   
We consider equations of the form 
\begin{equation}\label{eq:main1}
\Delta u+u_{yy}+a(x)u+f_1(x,u)=0,\quad(x,y)\in
\R^N\times\R=\R^{N+1}, 
\end{equation}
where $f_1(x,u)=u^2g(x,u)$ and all the listed functions are
sufficiently smooth.
The Schr\"odinger operator $-(\De+a(x))$ considered on a suitable
space of functions of $x\in\R^N$---the space reflects the structure of
the solutions one looks for, cp.~\eqref{eq:dec} or \eqref{eq:decper}---is
assumed to have $n\ge 2$ negative eigenvalues, all simple,
with the rest of its spectrum located in the positive half-line.
An application of the center-manifold theorem shows that 
equation \eqref{eq:main1} admits a class of solutions comprising a
finite dimensional manifold. These solutions are in one-to-one
correspondence with solutions of an
ordinary differential equation (ODE) on 
$\R^{2n}$, the \emph{reduced equation},
in which the variable $y$ plays the role of time. 
The reduced equation has a Hamiltonian structure 
and after a sequence of transformations---a Darboux transformation, a
normal form procedure, and action-angle variables---it can be written
in a neighborhood of the origin as a small
perturbation of  an integrable Hamiltonian system. 
The main issue in applying a suitable KAM theorem is
then the verification of a nondegeneracy condition for the integrable
Hamiltonian system.

In \cite{p-Valdebenito:hom}, where we examined solutions localized in
all $x$-variables, we proved that for
suitable nonlinearities $f=f(u)$ all the above requirements are
satisfied by the functions  $a(x)=f'(\varphi(x))$,
$f_1(x,u)=f(\varphi(x)+u)-a(x)u$, where $\varphi$
is a ground state of the
equation
\begin{equation}\label{eq:gs}
\Delta u+f(u)=0,\quad x\in\R^N.
\end{equation}
This way we have proved the
existence of positive $y$-quasiperiodic solutions of \eqref{eq:1}
satisfying \eqref{eq:dec}. Now, when $a(x)$ in
\eqref{eq:main1} is obtained  by the linearization at
the ground state, the assumption  that the operator
$-(\De+a(x))$ on  $L^2(\R^N)$ has  two negative eigenvalues is 
of utmost importance. Equivalently stated, the assumption requires
the ground state $\varphi$ to have Morse index greater than 1.
As mentioned above in connection with the $(N-1)$-dimensional problem
\eqref{eq:gsb}, for many nonlinearities, including   $f(u)=u^p-u$,
it is known that no such ground state can exist. 
Examples of nonlinearities $f$ for which a ground state of \eqref{eq:gs}  
has Morse index greater than 1 do exist, however (see
\cite{Dancer:uniqueness-sing,Davila-dP-G,P:morse}), and to some of
those the results of \cite{p-Valdebenito:hom} apply. 

In our present quest, seeking $y$-quasiperiodic solutions satisfying
\eqref{eq:decper}, we choose $a(x)=f'(\varphi(x))$
as the linearization at a ground state 
$\varphi$ of the equation $\De u+f(u)=0$ in $\R^{N-1}$, rather than $\R^N$.
Viewing $\varphi$ as a function on $\R^N$ constant in 
$x_N$, we consider the operator $-(\De +a(x))$
on a suitable space of functions periodic in $x_N$.
In this setting, it is  relatively easy,  even for $f(u)=u^p-u$,
to arrange that $-(\De +a(x))$ has two
negative eigenvalues by means of a  suitable scaling. Applying then
the general scheme described 
above, we obtain a Hamiltonian reduced equation 
in a form suitable for an application of theorems
from the KAM theory. Here we quickly run into a difficulty, and a major
difference from  \cite{p-Valdebenito:hom}:
the integrable part of this Hamiltonian is necessarily degenerate.
This is due to the symmetries in the problem, regardless of the choice
of the  nonlinearity $f=f(u)$.  
To deal with this difficulty, we use KAM type results
for Hamiltonian systems with ``external parameters'' 
as given in \cite{Broer-H-T, Huitema:thesis}.
It turns out that a scaling parameter which we introduce in
\eqref{eq:1} and which plays the role of an external parameter in the
reduced Hamiltonian gives us enough control over the linear
part of the  Hamiltonian for the KAM type
results to apply.

We formulate our main result, Theorem \ref{thm:main}, on the existence of
$y$-quasiperiodic solutions satisfying \eqref{eq:decper}
in the next section. In the same section, we also state two other new
theorems, Theorem \ref{thm:new} and \ref{thm:newloc},
concerning elliptic equations with parameters. Section \ref{proof1}
contains the proof of 
Theorem \ref{thm:new}, which after minor modifications also gives
the proof of Theorem \ref{thm:newloc}.
We will later show how  \eqref{eq:1} can be put in the context of
such equations by introducing a scaling parameter and thus derive
Theorem \ref{thm:main} from Theorem \ref{thm:new} (see Section
\ref{proof2}).

\section{Statement of the main results} \label{sec:mainres}
In this section, we  first introduce some terminology and notation,
then state our main
results.

Given integers $n\ge 2$, $k\ge 1$, a vector
$\omega=(\omega_1,\dots,\omega_n)\in\R^n$ is said to be
\emph{nonresonant up to order $k$} if 
\begin{equation}
\label{eq:omeganonres} 
\omega\cdot\alpha\neq 0\text{ for all
}\alpha\in\Z^n\setminus\{0\}\text{ such that }\ |\alpha|\leq k.  
\end{equation}
Here $|\alpha|=|\alpha_1|+\dots+|\alpha_n|$, and $\omega\cdot\alpha$
is the usual dot product. 
If \eqref{eq:omeganonres} holds for all $k=1,2,\dots$, we say that
$\om$ is \emph{nonresonant}, or, equivalently, that the numbers
$\omega_1,\dots,\omega_n$ are \emph{rationally independent}. 

A function $u:(x,y)\mapsto u(x,y):\R^N\times\R\to\R$ is said to be
\emph{quasiperiodic} in $y$ if there exist an integer $n\geq 2$, a
nonresonant vector $\omega^*=(\omega^*_1,\dots,\omega_n^*)\in\R^n$,
and an injective function $U$ defined on $\torus^n$ (the
$n$-dimensional torus) with values in the space of real-valued
functions on $\R^N$ such that 
\begin{equation}\label{eq:quasiperiodicdef}
u(x,y)=U(\omega_1^* y,\dots,\omega_n^*y)(x) \quad(x\in\R^N,\,y\in\R).
\end{equation}
The  vector $\omega^*$ is called a \emph{frequency vector} and
its components the \emph{frequencies} of $u$.   
Obviously, there are always countably many frequency vectors of a given
quasiperiodic function, and translations (in $x$ or in $y$) of
quasiperiodic functions are 
quasiperiodic with the same frequencies. 

We emphasize that the nonresonance of the frequency vector 
is a part of our definition. In particular, a quasiperiodic 
function is not periodic and, if it has some regularity properties, 
its image is dense in an $n$-dimensional manifold diffeomorphic 
to $\torus^n$.

We formulate the following hypotheses on the function $f:\R\to\R$.
\begin{itemize}[align=left]
\item[\bf (S)]  $f\in C^{\ell}(\R)$, for some integer $\ell>14+N/2$, and
  $f(0)=0>f'(0)$.
\item[\bf (G)] Equation \eqref{eq:gsb} 
has a nondegenerate ground state $\varphi$ of Morse index 1. 
\end{itemize}

It is well known that  the decay of 
$\varphi$ to zero as $|x'|\to \infty$ is exponential and
$\varphi$ is radial about some center in $\R^{N-1}$ (see
\cite{Gidas-N-N:unbd}). Choosing a suitable
translation, we will always assume that it is
radially symmetric about the origin. 
We will often view $\varphi$ as a function of $x\in\R^N$
independent of the last variable $x_N$.

Our main result reads as follows. 
\begin{thm}\label{thm:main}
Assume that $N\ge 2$ and  \emph{(S), (G)} hold.  
Then there exists an uncountable family of
positive solutions of equation \eqref{eq:1} satisfying
\eqref{eq:decper} such that each of these solutions is
radially symmetric in $x'$, even in $x_N$, and  quasiperiodic in $y$
with  two (rationally independent) frequencies.
The frequency vectors of these quasiperiodic solutions
form an uncountable set in $\R^2$.
\end{thm}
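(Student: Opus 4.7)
The plan is to derive Theorem \ref{thm:main} from Theorem \ref{thm:new} by casting equation \eqref{eq:1} in the form \eqref{eq:main1} with an appropriate scaling parameter playing the role of the external parameter. First I would linearize at the ground state. Viewing $\varphi=\varphi(|x'|)$ as a function on $\R^N$ independent of $x_N$, the identity $\Delta_{x'}\varphi+f(\varphi)=0$ shows that substituting $u=\varphi+v$ into \eqref{eq:1} gives
\begin{equation*}
\Delta v+v_{yy}+f'(\varphi(x'))v+f_1(x',v)=0,
\end{equation*}
with $f_1(x',v)=f(\varphi+v)-f(\varphi)-f'(\varphi)v=v^2 g(x',v)$ as smooth as (S) allows. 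This has the shape of \eqref{eq:main1} with $a(x)=f'(\varphi(x'))$, and the solutions of \eqref{eq:1} required by Theorem \ref{thm:main} correspond to solutions $v$ that are radial in $x'$, even in $x_N$, and periodic in $x_N$ of some period $T>0$.

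Next I would treat the period $T$ as an external parameter. Rescaling $x_N\mapsto Tx_N/(2\pi)$ turns the equation into
\begin{equation*}
\Delta_{x'}v+\lambda v_{x_Nx_N}+v_{yy}+f'(\varphi(x'))v+f_1(x',v)=0
\end{equation*}
on the space of functions radial in $x'$ and $2\pi$-periodic and even in $x_N$, with parameter $\lambda:=(2\pi/T)^2>0$. Expanding $v=\sum_{k\geq 0}v_k(x')\cos(kx_N)$, the linearized operator decouples into the family $A+\lambda k^2$ on $L^2_{\rad}(\R^{N-1})$, where $A=-\Delta_{x'}-f'(\varphi)$ is the Schr\"{o}dinger operator from hypothesis (G). By (G), $A$ has a single simple negative eigenvalue $\mu_1<0$, no zero eigenvalue, and the rest of its spectrum contained in $[\mu_2,\infty)$ for some $\mu_2>0$. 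Hence for every $\lambda\in(-\mu_1/4,-\mu_1)$ the linearized operator has exactly two simple negative eigenvalues, $\mu_1$ (from $k=0$) and $\mu_1+\lambda$ (from $k=1$), while the $k\geq 2$ modes and the positive spectrum of $A$ contribute spectrum bounded away from $0$ in the positive half-line.

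With this spectral configuration the equation fits the hypotheses of Theorem \ref{thm:new} with $n=2$, $\lambda$ serving as the external parameter. Applying that theorem produces, for an uncountable set of values of $\lambda\in(-\mu_1/4,-\mu_1)$, solutions $v$ that are quasiperiodic in $y$ with two rationally independent frequencies. These solutions live near the origin on a finite-dimensional center manifold of functions decaying in $x'$, so $v$ is small and $u=\varphi+v>0$, while the radial and even symmetries are built into the underlying function space and are inherited automatically. Undoing the $x_N$-rescaling recovers positive solutions of \eqref{eq:1} with period $T=2\pi/\sqrt{\lambda}$ in $x_N$. Since distinct values of $\lambda$ change the linearized eigenvalues $\mu_1$ and $\mu_1+\lambda$, hence the KAM frequencies produced, varying $\lambda$ yields uncountably many distinct frequency vectors in $\R^2$.

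The step I expect to be the main obstacle is contained in Theorem \ref{thm:new} itself: carrying out the external-parameter KAM argument (in the spirit of Broer--Huitema--Takens) needed to overcome the degeneracy of the integrable part of the reduced Hamiltonian that, as noted in the introduction, is forced by the symmetries of the problem and cannot be cured by any choice of $f$. Relative to that, the derivation of Theorem \ref{thm:main} itself is mostly bookkeeping; the nontrivial checks are the spectral configuration above and the verification that varying $\lambda$ indeed produces an uncountable family of distinct frequency vectors in $\R^2$, rather than merely uncountably many solutions sharing only countably many frequencies.
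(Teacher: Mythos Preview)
Your overall strategy---introduce a one-dimensional parameter, arrange two simple negative eigenvalues, apply Theorem~\ref{thm:new}---is exactly the paper's. But two of your ``bookkeeping'' steps are actually wrong as written.

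\textbf{The rescaled equation does not fit Theorem~\ref{thm:new}.} By rescaling only $x_N$ you arrive at $\Delta_{x'}v+\lambda v_{x_Nx_N}+v_{yy}+f'(\varphi)v+f_1=0$, in which the parameter sits in the principal part. Theorem~\ref{thm:new} is stated for equations of the form $\Delta u+u_{yy}+a(x;s)u+f_1(x,u;s)=0$, with the parameter entering only through $a$ and $f_1$; hypotheses (S1), (S2), (A1) are all formulated for the fixed operator $-\Delta$. Your equation is not of this form, so the theorem does not apply as stated. The paper avoids this by rescaling \emph{all} variables: setting $\varphi^{\lambda}(x')=\varphi(\sqrt{\lambda}\,x')$ and $a(x;s)=(\lambda+s)f'(\varphi^{\lambda+s})$, $f_1(x,u;s)=(\lambda+s)f(\varphi^{\lambda+s}+u)-a(x;s)u$, so that the parameter lives in $a$ and $f_1$ and the Laplacian is untouched. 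The eigenvalues are then $(\lambda+s)\mu_0$ and $(\lambda+s)\mu_0+1$, and one must also explicitly verify (ND) by computing $\det[\,\omega'(0)\ \omega(0)\,]\ne 0$; you do not mention this hypothesis at all, and it is the crux of Theorem~\ref{thm:new}.

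\textbf{The positivity argument fails.} Smallness of $v$ in $L^\infty$ does not give $\varphi+v>0$: since $\varphi(x')\to 0$ as $|x'|\to\infty$, any uniform bound $|v|<\varepsilon$ still allows $\varphi+v<0$ in the region $\{\varphi<\varepsilon\}$. The paper handles this differently: it first modifies $f$ on $(-\infty,0)$ so that $f(u)>0$ there (harmless for positive solutions), and then uses the maximum principle---a bounded solution with a negative local minimum would violate the equation at that point. This step is not optional; it is the only way positivity is obtained.

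A minor point: Theorem~\ref{thm:new} does not assert solutions ``for an uncountable set of values of $\lambda$''; it gives an uncountable set $W$ of frequency vectors, each realized for \emph{some} value of the parameter. The uncountability of frequency vectors after undoing the rescaling uses that no two elements of $W$ are linearly dependent, since the rescaling multiplies each frequency vector by a scalar depending on the parameter.
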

\begin{rmkn}
  \begin{itemize}[align=left,itemindent=3ex,leftmargin=0pt]
   \item[(i)]
  Our proof shows that the family of  solutions as in Theorem
  \ref{thm:main} can be found 
  in any given uniform neighborhood of $\varphi$;
  see Remark \ref{rmk:thmnew}(iii) below. 
 Note, however,
that we cannot guarantee that all these solutions have the same
period in  $x_N$; see Remark \ref{rmk:thmnew}(ii) for an explanation of
this.
 \item[(ii)] As mentioned in the introduction, our theorem applies to equation
\eqref{eq:1p} if $p<(N+1)/(N-3)_+$ is an integer or is sufficiently
large. Specifically, if $p$ is not an integer,
for hypothesis (S) to be satisfied it is sufficient that
$p>14+N/2$.  Note that exponents $p$ satisfying both relations
$14+N/2<p<(N+1)/(N-3)_+$ exist only if $N\le 3$. Integers $p>1$
satisfying $p<(N+1)/(N-3)_+$ exist if $N\le 6$. We remark that
the smoothness in (S) is just a technical, and by no means optimal,
requirement. 
  \end{itemize}
\end{rmkn}

Although the values of $f(u)$ for $u<0$
are irrelevant for the statement of Theorem \ref{thm:main},
it will be convenient to assume   that  
\begin{equation}\label{eq:extendf}
f(u)>0\quad (u<0).
\end{equation}
In view of the conditions $f(0)=0>f'(0)$, this can be arranged,
without affecting the smoothness of $f$, by
modifying $f$ in $(-\infty,0)$.

We will show that Theorem \ref{thm:main} is a consequence of
a more general theorem dealing with the equation
depending on a parameter $s\in \R^d$, $s\approx 0$: 
\begin{equation}\label{eq:a1}
\Delta u+u_{yy}+a(x;s)u+
f_1(x,u;s)=0,\quad x\in \R^N,\ y\in\R.
\end{equation}
Here $f_1$ is a nonlinearity satisfying 
\begin{equation}\label{eq:f}
  f_1(x,0;s)=\frac\partial{\partial u}
  f_1(x,u;s)\rest_{u=0}=0\quad(x\in\R^N,\ s\approx 0),
\end{equation}
and the  functions $a$, $f_1$
are assumed to be  radially symmetric in $x'$, and
even and $2\pi$-periodic in $x_N$. To
indicate the $2\pi$-periodicity in $x_N$, we usually consider
$a$, $f_1(\cdot,u)$  as functions on
$\R^{N-1}\times S$, with $S=\R\mod
2\pi$. We formulate the precise
hypotheses on $a$,  $g$
shortly, after introducing some notation.

We denote by $ C_\teb(\R^N)$ the space of all
continuous bounded  (real-valued) functions on $\R^N$ and by
$ C_\teb^k(\R^N)$  the space of  functions on $\R^N$ with
continuous bounded derivatives up to order $k$,
$k\in\N:=\{0,1,2,\dots\}$. The spaces $ C_{\rade}(\R^{N-1}\times S)$ and
$ C^k_{\rade}(\R^{N-1}\times S)$
are the subspaces of $ C_\teb(\R^N)$ and
$ C_\teb^k(\R^N)$, respectively, consisting of the
functions which are radially symmetric in $x'$, and
 $2\pi$-periodic and even in $x_N$. 
For $k\in\N$, the spaces $L^2_\rade(\RS)$ and
$H^k_\rade(\RS)$ are the closed
subspaces of $L^2(\RS)$ and $H^k(\RS)$,
respectively, consisting of all functions which
are radially symmetric in $x'$ and even in $x_N$.
We assume the standard norms on (the real spaces) $L^2(\RS)$ and
$H^k(\RS)$---for 
example, for $v\in L^2(\RS)$, $\|v\|^2$ is the integral of $v^2$ over 
$\R^{N-1}\times(-\pi,\pi)$---and take the induced norms on the 
subspaces.  

Fix integers $n>1$ (for the number of frequencies of
quasiperiodicity) and $d\ge n-1$ (for the dimension of the parameter
space), and let $B$ be an open neighborhood of the origin in $\R^d$. 
We assume that  the functions 
$a$ and $g$ satisfy
the following hypotheses with some integers 
\begin{equation}\label{eq:Kn}
K> 4n+1,\quad m>\frac{N}2.
\end{equation}

\begin{itemize}[align=left]
\item[\bf (S1)] $a(\cdot;s)\in C^{m+1}_\rade(\RS)$ for each
  $s\in B$, and the map $s\in B\mapsto 
a(\cdot;s)\in C^{m+1}_\rade(\RS)$ is of class 
$ C^{K+1}$.
\item[\bf (S2)]
  $f_1\in C^{K+m+4}(\RS\times \R\times
  B)$, and for all $\vartheta>0$ the function $f_1$ is bounded
  on $\RS\times [-\vartheta,\vartheta]\times B$ together with
  all its partial derivatives up to order $K+m+4$. Also, 
\eqref{eq:f} holds and $f_1(x,u;s)$
  is radially symmetric in $x'$ and even in $x_N$.  
\end{itemize}

The next hypotheses concern the Schr\"{o}dinger operator 
$A_1(s):=-\Delta-a(x;s)$ acting on $L^2_\rade(\RS)$ 
with domain $H^2_\rade(\RS)$. 
\begin{itemize}[align=left]
\item[\bf (A1)(a)] There exists $L<0$ such that
  \begin{equation*}
    \text{
  $\limsup_{|x'|\to\infty} a(x',x_N;s)\leq L$,  uniformly in $x_N$, $s$. }
  \end{equation*}
\item[\bf (A1)(b)] For all $s\in B$, $A_1(s)$ has exactly $n$
  nonpositive eigenvalues, 
  $$\mu_1(s)<\mu_2(s)<\dots<\mu_n(s),$$
  all of them simple, and $\mu_n(s)<0$.  
\end{itemize}
Hypotheses (A1)(a) and (A1)(b) will
sometimes be collectively referred to as (A1). 
Hypothesis (A1)(a) guarantees that for all $s$ 
the essential spectrum $\sigma_{ess}(A_1(s))$ is contained in
$[-L,\infty)$ \  \cite{Dancer:new,Reed-S:IV}.
Since $-L>0$, hypothesis (S1) and the simplicity of the eigenvalues 
 in (A1)(b) imply that $\mu_1(s),\dots,\mu_n(s)$ are $ C^{K+1}$ 
 functions of $s$  (see \cite{Kato:bk}).
 This justifies the use of the derivative in our last hypothesis
 (ND). 
Let $\omega(s):=(\omega_1(s),\dots,\omega_n(s))^T$ (so $\om(s)$ is a column
vector), where
\begin{equation}
  \label{eq:10}
  \omega_j(s):=\sqrt{|\mu_j(s)|}, \quad j=1,\dots,n.
\end{equation}

\begin{itemize}[align=left]
\item[\bf (ND)] The 
   $n\times (d+1)$ matrix $\big[\,\nabla \om(0)\ \  \om(0)\, \big]$
  has rank $n$. 
 \end{itemize}

We can now state our theorem concerning \eqref{eq:a1}.

\begin{thm}\label{thm:new}
Suppose that  hypotheses \emph{(S1)}, \emph{(S2)} 
 (with $K$, $m$ as in \eqref{eq:Kn}), \emph{(A1)}, and \emph{(ND)} are
 satisfied. Then there is an uncountable set  $W\subset \R^n$
 consisting of rationally independent vectors, no two of them
 being linearly dependent, such that for every 
 $(\bar\om_1,\dots,\bar\om_n)\in W$ the following holds: equation \eqref{eq:a1}
 has for some $s\in B$ a solution $u$ 
 such that \eqref{eq:decper} holds, and $u(x,y)$ is radially symmetric
in $x'$, even  and $2\pi$-periodic in $x_N$, 
and quasiperiodic in $y$ with frequencies 
$\bar\om_1,\dots,\bar \om_n$.
\end{thm}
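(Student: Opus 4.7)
The plan is to reduce \eqref{eq:a1} to a finite-dimensional Hamiltonian ODE through spatial dynamics and a center-manifold construction, and then to locate quasiperiodic solutions of the reduced system by means of a parameter-dependent KAM theorem. Concretely, I would set $v=u_y$ and view the equation as the first-order system
\begin{equation*}
u_y=v,\qquad v_y=A_1(s)u-f_1(x,u;s),
\end{equation*}
on a phase space built from the scale $H^k_\rade(\RS)$; this system is Hamiltonian in a natural symplectic structure on $H^1_\rade(\RS)\times L^2_\rade(\RS)$. Its linearization at the origin has purely imaginary spectrum $\{\pm i\om_j(s):j=1,\dots,n\}$ by (A1)(b), while (A1)(a) pushes the rest of the spectrum onto the real axis, uniformly bounded away from $0$. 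A Kirchgassner/Mielke-type center-manifold theorem for ill-posed elliptic equations then produces, for each $s\in B$, a $C^K$-smooth $2n$-dimensional invariant manifold $\mathcal M(s)$, depending smoothly on $s$, that contains every sufficiently small bounded trajectory. Equivariance under the radial action in $x'$ and the reflection/translation action in $x_N$ keeps $\mathcal M(s)$ inside the symmetry class of interest, and because the center eigenfunctions of $A_1(s)$ decay exponentially in $|x'|$, every small bounded solution on $\mathcal M(s)$ automatically satisfies \eqref{eq:decper}.

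The second step is to put the reduced $2n$-dimensional Hamiltonian on $\mathcal M(s)$ into a form suitable for KAM. A Darboux change of coordinates normalizes the symplectic form, after which the quadratic part of the Hamiltonian becomes
\begin{equation*}
H_2(q,p;s)=\tfrac12\sum_{j=1}^n\bigl(p_j^2+\om_j(s)^2 q_j^2\bigr),
\end{equation*}
and the nonlinear part starts at cubic order because of \eqref{eq:f}. Passing to action--angle variables $(I,\theta)\in\R^n\times\torus^n$ and running a finite-step Birkhoff normal form (legitimate thanks to the regularity bookkeeping in $K$, $m$ from \eqref{eq:Kn} and the smoothness of $f_1$ in (S2)) eliminates the non-resonant low-order terms, yielding, after a scaling $I\mapsto\varepsilon^2 I$ on a neighborhood of $\{0\}\times\torus^n$, a Hamiltonian of the form
\begin{equation*}
H(I,\theta;s,\varepsilon)=\om(s)\cdot I+\tfrac{\varepsilon^2}{2}\langle M(s)I,I\rangle+\varepsilon^4 R(I,\theta;s,\varepsilon),
\end{equation*}
with $R$ uniformly small in a suitable $C^r$ norm. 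This is the familiar near-integrable picture: $\om(s)\cdot I$ is the integrable motion, the quadratic-in-$I$ term is the twist, and $R$ is the KAM perturbation.

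The main obstacle is that the twist matrix $M(s)$ is necessarily degenerate, a structural consequence of the symmetries already highlighted in the introduction, so classical Kolmogorov/Arnold nondegeneracy is unavailable. I would instead invoke the parameter-dependent KAM theorem of Broer--Huitema--Takens/Huitema (\cite{Broer-H-T,Huitema:thesis}), whose nondegeneracy requirement is a submersion-type condition on the frequency map jointly in the external parameter $s$ and the internal scaling parameter. Hypothesis (ND) asserts precisely that the $n\times(d+1)$ matrix $\bigl[\,\nabla\om(0)\ \om(0)\,\bigr]$ has rank $n$, and together with $\varepsilon$ this yields the transversality of $s\mapsto\om(s)$ to rays through the origin in $\R^n$ that the theorem demands. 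The theorem then produces a Cantor-like set of positive measure of Diophantine frequency vectors $\bar\om$, together with corresponding values of $s\in B$, for which the reduced Hamiltonian admits an invariant $n$-torus of frequency $\bar\om$. From this Cantor set one extracts, by a standard measure-theoretic argument, an uncountable subfamily $W$ of rationally independent vectors no two of which are proportional. Lifting each such trajectory back through the reduction yields a solution $u(x,y)$ of \eqref{eq:a1} with the asserted symmetries, $2\pi$-periodicity in $x_N$, decay \eqref{eq:decper}, and quasiperiodicity in $y$ with frequency vector $\bar\om$.
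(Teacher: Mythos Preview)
Your overall architecture---spatial dynamics, center-manifold reduction to a $2n$-dimensional Hamiltonian system, Darboux normalization, action--angle variables, then a parameter-dependent KAM theorem \`a la Broer--Huitema--Takens---matches the paper's. But there are two substantive departures, and the second one is a genuine gap.

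First, the Birkhoff normal form step is not in the paper and is not needed. The paper takes the \emph{entire} nonlinear part of the Hamiltonian as the perturbation and keeps only the linear integrable piece $G^0(I;s)=\omega(s)\cdot I$. After the scaling $J=\varepsilon I$ one has $G^1(\theta,I;s)=\varepsilon^{-1}\hat\Phi(\theta,\varepsilon I;s)=\mathcal O(\varepsilon^{1/2})$ in $C^k$, so the smallness hypothesis of the BHT theorem is met without any normal-form preprocessing. Your Birkhoff step would in addition require a finite-order nonresonance condition on $\omega(s)$ that the theorem does not assume, and since you then declare the resulting twist $M(s)$ degenerate and fall back on BHT with external parameters anyway, the detour buys nothing.

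Second, and more important, your account of how (ND) produces the BHT nondegeneracy is not correct. With integrable part $\omega(s)\cdot I$ the frequency map is $\omega^*(I;s)=\omega(s)$, so the BHT submersion condition asks that $\nabla_s\omega(s)$ have rank $n$. Hypothesis (ND) only guarantees rank $n$ for the augmented matrix $[\,\nabla\omega(0)\ \ \omega(0)\,]$, which allows $d=n-1$ and $\operatorname{rank}\nabla\omega(s)=n-1$. Your action-scaling parameter $\varepsilon$ does \emph{not} supply the missing direction: the $\varepsilon$-derivative of the frequency map vanishes at $\varepsilon=0$, and for $\varepsilon>0$ it lies in the range of the (degenerate) twist $M(s)$, not along $\omega(s)$. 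The paper's device is different: in Lemma~\ref{le:NDs} one introduces a \emph{time}-rescaling parameter $\beta\approx 1$ and considers $\tilde G^0(I;s,\beta)=\beta\,\omega(s)\cdot I$. Then $\partial_\beta(\beta\omega(s))=\omega(s)$, and the Jacobian $[\,\beta\nabla_s\omega(s)\ \ \omega(s)\,]$ has rank $n$ precisely by (ND). BHT now applies to the enlarged parameter family; the resulting quasiperiodic solutions of the $\beta$-system are converted to solutions of the original system by undoing the time rescaling, which merely multiplies each frequency vector by a scalar close to $1$ and preserves the ``no two linearly dependent'' property of the set $W$. This time-rescaling trick is the crux that turns (ND) into the BHT submersion condition, and it is missing from your proposal.
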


\begin{rmkn}\label{rmk:thmnew}
\begin{itemize}[align=left,itemindent=3ex,leftmargin=0pt]
\item[\rm (i)] Similarly as theorems in
  \cite{p-Valdebenito,p-Valdebenito:quadratic},
  Theorem \ref{thm:new} gives sufficient conditions in terms
  of the coefficients and nonlinearities in a given elliptic equation, 
  presently equation \eqref{eq:a1},  for the existence of
  solutions quasiperiodic in $y$ and satisfying required decay and/or
  symmetry conditions in $x$. The conclusions of the results
  in \cite{p-Valdebenito,p-Valdebenito:quadratic} are in some
  sense stronger: they yield uncountably many quasiperiodic solutions
  for \emph{every} value of the parameter in a certain range
  (which may be required to  be small enough). In contrasts,
  Theorem \ref{thm:new} yields quasiperiodic solutions 
  for \emph{some} values of $s\in B$, possibly leaving out a large set
  of other values.  On the other hand, the present theorem
  has a weaker nondegeneracy condition than the theorems in
  \cite{p-Valdebenito,p-Valdebenito:quadratic}. The nondegeneracy
  conditions in \cite{p-Valdebenito,p-Valdebenito:quadratic} involve some
  nonlinear terms (quadratic or cubic) in the equation,
  whereas our present nondegeneracy condition, (ND), is a condition on
  the coefficient $a$ in the linear part of the equation alone.
  This makes  (ND) much easier to use in
  applications.  Indeed, while the
  nondegeneracy conditions involving nonlinear terms 
  are ``generic'' if the class of admissible nonlinearities
  is large enough, their verification in specific equations, such as the
  spatially homogeneous equation \eqref{eq:1},
  presents a substantial technical hurdle
  (cp.~\cite{p-Valdebenito:hom}). 
  The verification of the present condition (ND)  is, in principle, 
  simpler; it amounts to showing that one has   ``good enough'' control
over the  eigenvalues of a linearized problem when parameters are
varied. 

\item[(ii)] When applying Theorem \ref{thm:new}
  in the proof of Theorem \ref{thm:main}, we introduce
  a parameter $s\in\R$ in
\eqref{eq:1}---so \eqref{eq:1} can be viewed in the context of
\eqref{eq:a1}---by scaling of the variables $(x,y)$.
Therefore, the 
$y$-quasiperiodic solutions which we find using Theorem \ref{thm:new}
for some values of  $s$ will in fact yield, after the
inverse rescaling, $y$-quasiperiodic
solutions of the same original equation \eqref{eq:1} and, due to the
properties of the set $W$, the frequencies of these quasiperiodic
solutions will form an uncountable set.  Note, however, that the
rescaling changes the period in $x_N$. This is  why
we are not able to prescribe the period, say $2\pi$, for the
solutions $u$ in Theorem \ref{thm:main}, with a fixed nonlinearity $f$. 

\item[(iii)] The conclusion of Theorem \ref{thm:new} (as
  well as the conclusion of Theorem \ref{thm:newloc} below) remains valid
  if the solutions $u$ are in addition required to be small in the
  sense that for an arbitrarily given $\ep>0$ one has
  $\sup_{(x,y)\in\R^{N+1}} |u(x,y)|<\ep$.  This follows from the
  proof, where the solutions are found on a local center
  manifold of \eqref{eq:a1}. Accordingly, for any $\ep>0$ one can find
  a solution  $u$ as in Theorem \ref{thm:main} with the property that 
  $\sup_{(x',x_N,y)\in \R^{N+1}} |u(x',x_n,y)-\varphi(x')|<\ep$,
  where  $\varphi$ is the ground state as in (G).
  
\item[(iv)] 
  Evenness with respect to $x_N$ can be dropped in
  the assumptions on $a$ and $g$, and in the definition of the
  domain and the target space of the
operator $A_1(s)=-\Delta-a(x;s)$ (and then it has to be dropped in
the conclusion of  Theorem \ref{thm:new}).
Note, however, that if  $a$, $g$ are even---as will be the
case in an application of Theorem \ref{thm:new}
below---the eigenvalues $\mu_2(s),\dots,\mu_n(s)$ of
the operator $-\Delta-a(x;s)$ may be simple in the space of even
functions but not in the full space. Similarly, it is possible to drop
the assumption of radial symmetry in $x'$, but the simplicity of the
eigenvalues may fail to hold in the full space. 

\item[(v)]  A nondegeneracy condition of the same form as (ND)
  appears in Scheurle's paper 
  \cite{Scheurle:ode} on bifurcations of quasiperiodic
  solutions in analytic reversible ODEs. He used techniques similar to
  \cite{Scheurle:ode} in the paper \cite{Scheurle:strip}, already
  mentioned in the introduction, on (analytic)
  elliptic equations on the strip $\{(x,y):x\in(0,1),y\in\R\}$ .
\end{itemize}
\end{rmkn}

The localized-periodic setting in which we consider
equation \eqref{eq:a1} reflects our goal to study solutions 
satisfying \eqref{eq:decper}. However, 
our present techniques can be used 
in other settings; for example, one can consider
a different split between periodicity and decay variables in
$x_1,\dots,x_N$. Straightforward, mostly notational,
modifications of the arguments below apply in any such
 setting. As an illustration, we
formulate a  theorem analogous to Theorem
\ref{thm:new} in but one different setting:
the symmetry and decay (and no periodicity) in all variables
$x$. 

We need the following spaces: 
$ C_{\rad}(\R^{N})$,
$ C^k_{\rad}(\R^{N})$ consist of all 
radially symmetric functions in  $ C_\teb(\R^N)$ and
$ C_\teb^k(\R^N)$, respectively; $L_{\rad}^2(\R^N)$ is the space
of all radial $L^2(\R^N)$-functions,  and for 
$k\in\N$,  $H^k_{\rad}(\R^N):=H^k(\R^N)\cap L^2_\rad(\R^N)$ is the space of all radial
$H^k(\R^N)$-functions.

\begin{thm}\label{thm:newloc}
  Let $K$ and $m$ be as in \eqref{eq:Kn}.
Assume that hypotheses \emph{(S1)}, \emph{(S2)}, \emph{(A1)}, 
 \emph{(ND)}  are satisfied with 
$ C^{m+1}_{\rade}(\R^{N-1}\times S)$ replaced by
$ C^{m+1}_{\rad}(\R^{N})$,
$ C^{K+m+4}(\RS\times \R\times  B)$ by
$ C^{K+m+4}(\R^N\times\R\times
  B)$, $L^2_\rade(\RS)$ by $L_{\rad}^2(\R^N)$, and 
  $H^2_\rade(\RS)$ by $H_{\rad}^2(\R^N)$;
and the last assumption in\emph{ (S2)} (radial symmetry in $x'$ and
periodicity in $x_n$) replaced by the assumption that $f_1$ is radially
symmetric in $x$. Then there is an uncountable set  $W\subset \R^n$
 consisting of rationally independent vectors, no two of them
 being linearly dependent, such that for every 
 $(\bar\om_1,\dots,\bar\om_n)\in W$ the following holds:
 equation \eqref{eq:a1} has for some $s\in B$ a solution $u$ 
 such that \eqref{eq:dec} holds, and $u(x,y)$ is radially symmetric
in $x$
and quasiperiodic in $y$ with frequencies 
$\bar\om_1,\dots,\bar \om_n$.
\end{thm}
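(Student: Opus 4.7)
The plan is to follow the proof of Theorem \ref{thm:new} essentially verbatim, making only the notational adjustments that reflect the change from the localized-periodic setting to the fully radial, fully localized setting. Throughout, I would replace the spaces $L^2_\rade(\R^{N-1}\times S)$, $H^k_\rade(\R^{N-1}\times S)$, and $C^k_\rade(\R^{N-1}\times S)$ by $L^2_\rad(\R^N)$, $H^k_\rad(\R^N)$, and $C^k_\rad(\R^N)$, and drop everywhere the evenness and $2\pi$-periodicity in $x_N$. Since $a(\cdot;s)$ and $f_1(\cdot,u;s)$ are radially symmetric in $x$, equation \eqref{eq:a1} preserves the class of radial solutions, so the entire analysis takes place in the radial category and only notation changes.

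First I would recast \eqref{eq:a1} as a first-order system in the ``time'' variable $y$, with phase space a product of radial Sobolev spaces on $\R^N$. The linear part is governed by $A_1(s)=-\Delta-a(x;s)$ acting on $L^2_\rad(\R^N)$ with domain $H^2_\rad(\R^N)$. Hypothesis (A1)(a) places the essential spectrum in $[-L,\infty)$ with $L<0$, and (A1)(b) then gives $n$ simple negative eigenvalues $\mu_j(s)$, isolated from the rest of the spectrum. These give rise to $2n$ simple purely imaginary eigenvalues $\pm i\om_j(s)$ of the linearized first-order operator, with the remainder of the spectrum bounded away from the imaginary axis.

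Second, an application of the center manifold theorem for elliptic equations, in the same form used in the proof of Theorem \ref{thm:new}, produces a locally invariant $2n$-dimensional manifold whose integral curves are in one-to-one correspondence with small radial solutions of \eqref{eq:a1}. The reduced equation inherits the natural Hamiltonian (and reversible) structure in $y$. A Darboux transformation to put the symplectic form in standard shape, a Birkhoff normal-form procedure, and the introduction of action-angle coordinates near the origin bring the reduced Hamiltonian into the form $H(I,\theta;s)=\om(s)\cdot I+\order(|I|^2)+R(I,\theta;s)$, with smoothness governed by the thresholds $K,m$ in \eqref{eq:Kn} and with $R$ a small non-integrable remainder.

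Finally, apply the KAM theorem with external parameters of Broer--Huitema--Takens \cite{Broer-H-T, Huitema:thesis}. The main obstacle, inherited from the proof of Theorem \ref{thm:new}, is that the integrable part is degenerate in the Kolmogorov sense: symmetries of the problem force the leading $O(|I|)$ dependence of $H$ on the actions to be essentially dictated by $\om(s)$, so the usual twist condition fails. This is precisely why the parameter $s$ and the weaker nondegeneracy (ND) are indispensable. Condition (ND) supplies a R\"ussmann-type non-planarity for the frequency map $s\mapsto\om(s)$ through the rank of $[\nabla\om(0)\ \om(0)]$, which, combined with the rescaling of $y$ that accounts for the $\om(s)$-column, ensures that a Cantor set of parameters $s\in B$ of positive measure yields persistence of quasiperiodic invariant tori with Diophantine frequency vectors. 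Selecting an uncountable subset $W$ of such Diophantine vectors, each matched to some $s\in B$, and lifting the tori via the center manifold embedding produces the required solutions of \eqref{eq:a1}, which by construction are radially symmetric in $x$, quasiperiodic in $y$ with the prescribed frequencies, and satisfy \eqref{eq:dec}.
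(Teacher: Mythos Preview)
Your proposal is correct and matches the paper's own treatment exactly: the paper states that Theorem~\ref{thm:newloc} follows from the proof of Theorem~\ref{thm:new} by making the obvious replacements of the underlying function spaces, which is precisely what you outline. One minor slip: after passing to action-angle variables the remainder $\hat\Phi$ is of order $|I|^{3/2}$, not $|I|^2$, but this does not affect the scaling argument or any subsequent step.
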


For the proof of this theorem, one just needs to make obvious changes
in the proof of Theorem \ref{thm:new}
consisting mostly of replacements of
the underlying spaces as in the formulation of the theorem. 

\begin{rmkn}
If one considers periodicity in two or more variables (say,
$(x_1,\dots,x_j)$), the dependence of $a$ and $f_1$ on those variables
may also impose some additional restrictions on the setting, for
instance, if $a_1$ and $f$ do not depend on $(x_1,\dots,x_j)$, then
the corresponding periods must be chosen suitably to keep the
simplicity of the eigenvalues of $-\Delta-a(x;s)$. 
\end{rmkn}

\section{Proof of Theorem \ref{thm:new}}\label{proof1}

We use the notation introduced in the previous section and assume
hypotheses {(S1)}, {(S2)}, {(A1)}, 
 {(ND)}  to be satisfied.
Let $B_\de:=\{s\in\R^d:|s|<\de\}$, where we take $\de>0$ so that
$B_\de\subset B$ (below we will make $\de>0$ smaller several times). 

For $s\in B_\de$
and $j=1,\dots,n$, we 
denote by $\varphi_j(\cdot;s)$ an  eigenfunction of the operator
$A_1(s)$ associated with the eigenvalue $\mu_j(s)$
normalized in the $L^2$-norm. For 
the \emph{principal eigenfunction}  $\varphi_1(\cdot;s)$, we may
assume that it is positive which determines it uniquely,
and it is then of class $ C^{K+1}$ as a
$H^2_\rade(\RS)$-valued function of $s$
(see \cite{Kato:bk}). The same applies to $\varphi_j(\cdot;s)$, provided
it is chosen suitably (the normalization determines it uniquely up to
a sign). Since $\mu_1(s)<\dots<\mu_n(s)$ are simple isolated eigenvalues
of $A_1(s)$, the eigenfunctions
$\varphi_1(\cdot;s),\dots, \varphi_n(\cdot;s)$ have  
exponential decay as $|x'|\to
\infty$ \cite{Agmon, Reed-S:IV}. 

Since the essential spectrum of $A_1(s)$ is contained in
$[-L,\infty)$, 
the eigenvalues in $(-\infty,-L)$ are isolated in
$\sigma(A_1(s))$ and  hypotheses  (A1)(a), (A1)(b) imply that there is
$\gamma>0$ such that 
$(0,\gamma)\cap\sigma(A_1(s))=\emptyset$ for all $s\in B_\de$.

Hypotheses (S1), (S2), (A1)(a), (A2)(b), (NR) are
  analogous to some hypotheses in our previous papers 
  \cite{p-Valdebenito,p-Valdebenito:quadratic}. In those papers
  we mainly focused on solutions which are radially
  symmetric and decaying in all variables $x$ and,
accordingly, the assumptions on the functions $a$, $f_1$
involved radial symmetry in $x$.  In the present setting, we assume
radial symmetry in $x'$ and 
periodicity in $x_N$. As noted in 
\cite[Remark~2.1(v)]{p-Valdebenito}, 
\cite[Remark~2.1(ii)]{p-Valdebenito:quadratic},
the general technical results from \cite{p-Valdebenito,p-Valdebenito:quadratic}
apply in the present setting
with straightforward modifications of the proofs.
In the next subsection, we recall the needed results from
\cite{p-Valdebenito,p-Valdebenito:quadratic}.

\subsection{Center manifold and the structure of the reduced
  equation}\label{sec:reduced}  

Here we essentially just reproduce Section 3 of
\cite{p-Valdebenito:quadratic} (which in turn is an
extension of results in Sections
3 and 4 of  \cite{p-Valdebenito}) with minor adjustments in the
notation on the account of the present
periodicity-decay setting. The fact that
$s\in B_\delta\subset \R^d$, whereas in
\cite{p-Valdebenito:quadratic} we had $s\in
(-\de,\de)\subset \R$, makes no  nontrivial
difference in the proofs.

We begin with the center manifold reduction. For that we first write
equation \eqref{eq:a1} in an abstract form, using the spaces
$X:=H^{m+1}_\rade(\RS)\times H^{m}_\rade(\RS)$, and
$Z:=H^{m+2}_\rade(\RS)\times 
H^{m+1}_\rade(\RS)$. 
Let $f_1$ be as in \eqref{eq:f}. Its  Nemytskii operator
$\tf:H^{m+2}_\rade(\RS)\times  B_\de\to H^{m+1}_\rade(\RS)$ is given by
\[
\tf(u;s)(x)=f_1(x,u(x);s),
\]
and it  a well defined map of class $ C^{K+1}$
(see \cite[Theorem A.1(b)]{p-Valdebenito}).
The abstract form of \eqref{eq:a1} is 
\begin{equation}\label{eq:abstractsys}
\begin{aligned}
\frac{du_1}{dy}&=u_2,\\
\frac{du_2}{dy}&=A_1(s)u_1-\tf(u_1;s).
\end{aligned}
\end{equation}
We rewrite this further as 
\begin{equation}\label{eq:absys2}
\frac{du}{dy}=A(s)u+R(u;s),
\end{equation}
where $u=(u_1,u_2)$,
\begin{equation}
  \label{eq:8}
  \begin{aligned}
A(s)(u_1,u_2)&=(u_2,A_1(s)u_1)^T,\\
R(u_1,u_2;s)&=(0,\tf(u_1;s))^T.
\end{aligned}
\end{equation}
Here, for each $s\in  B_\de$, $A(s)$ 
is considered as an operator on $X$ with domain
$D(A(s))=Z$, and $R$ as a $ C^{K+1}$-map from 
 $Z\times  B_\de$ to $Z$. 
The notion of a solution of \eqref{eq:absys2} on an interval $\cI$ is 
as in \cite{Haragus-I:bk,Vander-I}: it is a function
in  $ C^1(\cI,X)\cap 
 C(\cI,Z)$ satisfying \eqref{eq:absys2}. 

Recall that $\varphi_j(\cdot;s)$, $j=1,\dots,n,$ are
the eigenfunctions of $A_1(s):=-\Delta-a(x;s)$ corresponding to
the eigenvalues $\mu_1(s),\dots,\mu_n(s)$, and they have been chosen so
that they are of class
$ C^{K+1}$ as $H^2_\rade(\RS)$-valued functions of $s$. By elliptic
regularity, for $j=1,\dots,n$,  
$\varphi_j(\cdot;s)\in H^{m+2}_\rade(\RS)$
and it is of class $ C^{K+1}$ as a  $H^{m+2}_\rade(\RS)$-valued
function of $s$. 
Define the space
\[
X_c(s):=\big\{(h,\tilde{h})^T:h,\tilde{h}\in
\mathrm{span}\{\varphi_1(\cdot;s),\dots,\varphi_n(\cdot;s)\}\big\}\subset
Z,
\]
the orthogonal projection operator $$
\Pi(s):L^{2}_\rade(\RS)\to
\mathrm{span}\{\varphi_1(\cdot;s),\dots,\varphi_n(\cdot;s)\},$$
and let
$P_c(s):X\to X_c(s)$ be given by $P_c(s)(v_1,v_2)=(\Pi(s) v_1,\Pi(s)
v_2)$. As shown in \cite[Section 3.2]{p-Valdebenito},  
$P_c(s)$ is the spectral projection for the operator $A(s)$ associated
with the spectral set $\{\pm i\om_j(s):j=1,\dots,n\}$
(with $\om_j(s)$ as in \eqref{eq:10})---the spectrum
of $A(s)$ is the union of this set and a set which is at a positive
distance from the imaginary axis. 
The smoothness of the maps  $s\mapsto \varphi_j(\cdot;s)$
implies that  $s\mapsto P_c(s)$ is of class $ C^{K+1}$ as an
$\mathscr{L}(X,Z)$-valued map on $B_\de$.

 Also define $P_h(s)=I_X-P_c(s)$, $I_X$ being the identity map on $X$, 
and, for $j=1,\dots,n$,
\begin{equation}\label{eq:psizeta}
\psi_j(\cdot;s)=
(\varphi_j(\cdot;s),0)^T,\quad\zeta_j(\cdot;s)=(0,\varphi_j(\cdot;s))^T.  
\end{equation}
A basis of $X_c(s)$ is given by
\[
  \basis(s):=
  \{\psi_1(\cdot;s),\dots,\psi_n(\cdot;s),\zeta_1(\cdot;s),\dots,\zeta_n(\cdot;s)\}. 
\]
For $z\in X_c(s)$, we denote by $\{z\}_{\basis}$ the coordinates of
$z$ with respect to the basis $\basis(s)$. Denote further
\begin{equation}\label{eq:psizetav}
\begin{aligned}
\psi(s)&:=(\psi_1(\cdot;s),\dots,\psi_n(\cdot;s)),\\
\zeta(s)&:=(\zeta_1(\cdot;s),\dots,\zeta_n(\cdot;s)).
\end{aligned} 
\end{equation}

The following result is a part of
\cite[Proposition~3.1]{p-Valdebenito:quadratic}, 
adjusted to the present setting.  
\begin{prop}\label{prop:wc} 
Using the above notation, the following statement is valid, possibly
after making $\de>0$ smaller.  
There exist a map $\sigma:(\xi,\eta;s)\in\R^{2n}\times B_\de\mapsto
\sigma(\xi,\eta;s)\in Z$ of class $ C^{K+1}$ and a neighborhood
$\mathscr{N}$ of $0$ in $Z$ such that for each $s\in B_\de$ one has
\begin{align}
\label{eq:sigma-prop1}
\sigma(\xi,\eta;s)\in P_h(s)Z\quad ((\xi,\eta)\in\R^{2n}),\\
\label{eq:sigma-prop2}
\sigma(0,0;s)=0,\quad D_{(\xi,\eta)}\sigma(0,0;s)=0,
\end{align} 
and  the manifold
\[
W_c(s)=\{\xi\cdot\psi(s)+\eta\cdot\zeta(s)+
\sigma(\xi,\eta;s):(\xi,\eta)=
(\xi_1,\dots,\xi_n,\eta_1,\dots,\eta_n)\in\R^{2n}\}\subset Z  
\]
has the following properties:
	\begin{itemize}
	\item[(a)] If $u(y)$ is a solution of \eqref{eq:abstractsys} on $\cI=\R$
          and $u(y)\in \mathscr{N}$ for all $y\in\R$, then $u(y)\in
          W_c(s)$ for all $y\in\R$; that is, $W_c(s)$
          contains the trajectory of each solution of \eqref{eq:abstractsys}
          which stays in  $\mathscr{N}$ for all $y\in \R$.  
	
	\item[(b)] If $z:\R\to X_c(s)$ is a solution of the equation
	\begin{equation}
	\label{eq:reduced2} \frac{dz}{dy}=A(s)\big|_{X_c(s)}z+P_c(s)
        R(z+\sigma(\{z\}_\basis;s);s) 
	\end{equation}
	on some interval $\cI$, and 
        $u(y):=z(y)+\sigma(\{z(y)\}_\basis;s)\in\mathscr{N}$ for all $y\in \cI$,
        then $u:\cI\to Z$ is a solution of \eqref{eq:abstractsys} on $\cI$. 
	\end{itemize}
\end{prop}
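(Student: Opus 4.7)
The plan is to obtain Proposition \ref{prop:wc} as an application of a parametrized center manifold theorem for ill-posed evolution equations, such as the one in \cite[Chapter 2]{Haragus-I:bk} or \cite{Vander-I}, with $s$ entering as an additional parameter. Since the statement is essentially \cite[Proposition~3.1]{p-Valdebenito:quadratic} transferred from the fully-radial setting on $\R^N$ to the present radial-in-$x'$, periodic-in-$x_N$ setting, I would adapt the proof from there and verify that the three standard hypotheses---spectral splitting on the imaginary axis, uniform resolvent estimates on the hyperbolic part, and smoothness of the nonlinearity---remain valid with $L^2_\rad(\R^N)$ replaced by $L^2_\rade(\RS)$, and with the corresponding replacements for $X$ and $Z$.

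First, I would record the spectral splitting. The self-adjoint operator $A_1(s)$ has exactly the $n$ simple negative eigenvalues $\mu_j(s)$, with the rest of its spectrum in $[\gamma,\infty)$ for some $\gamma>0$, uniformly in $s\in B_\de$; from the block structure of $A(s)$ this produces the decomposition $\sigma(A(s))=\{\pm i\omega_j(s):j=1,\dots,n\}\cup\sigma_h(s)$, with $\sigma_h(s)$ bounded away from $i\R$ uniformly in $s$. Using the smoothness of $s\mapsto\varphi_j(\cdot;s)$ in $H^{m+2}_\rade(\RS)$ and Kato's perturbation formulas, the spectral projections $P_c(s)$ and $P_h(s)=I_X-P_c(s)$ are of class $ C^{K+1}$ as $\mathscr{L}(X,Z)$-valued functions. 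The key analytic ingredient is then a uniform resolvent bound
\[
\|(A(s)-i\tau)^{-1}\|_{\mathscr{L}(P_h(s)X)}\lesssim \frac{1}{1+|\tau|}\quad(\tau\in\R,\ s\in B_\de),
\]
together with its loss-of-regularity counterpart between $X$ and $Z$. By the block form of $A(s)$, both estimates reduce to self-adjoint functional calculus for $A_1(s)$ acting on the $L^2$-orthogonal complement of $\mathrm{span}\{\varphi_1(\cdot;s),\dots,\varphi_n(\cdot;s)\}$, combined with elliptic regularity; this was carried out in \cite[Section~3]{p-Valdebenito} for the radial case and the arguments transfer verbatim to $\RS$ since they use only self-adjointness, the spectral gap, and the decay of the potential provided by (A1)(a).

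Second, the nonlinearity is handled by \cite[Theorem~A.1(b)]{p-Valdebenito}, which together with (S2) makes the Nemytskii operator $\tf$ of class $ C^{K+1}$ from $H^{m+2}_\rade(\RS)\times B_\de$ into $H^{m+1}_\rade(\RS)$; the Sobolev embedding $H^{m+2}\hookrightarrow C^0_\teb$ (for $m>N/2$) is the same on $\RS$ as on $\R^N$. Consequently $R\in C^{K+1}(Z\times B_\de,Z)$ with $R(0;s)=0$ and $D_u R(0;s)=0$ by \eqref{eq:f}, so the parametrized center manifold theorem delivers the $ C^{K+1}$ graph map $\sigma$ satisfying \eqref{eq:sigma-prop1} and \eqref{eq:sigma-prop2}, and properties (a) and (b) are its standard invariance and flow-reduction conclusions. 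The principal obstacle is really only the displayed resolvent estimate: because \eqref{eq:absys2} is ill-posed one cannot use analytic semigroup theory, and the estimate must instead be assembled from self-adjoint functional calculus for $A_1(s)$ together with elliptic regularity on $\RS$. As the remark preceding this subsection notes, the corresponding arguments in \cite{p-Valdebenito,p-Valdebenito:quadratic} adapt to the periodic-radial setting with purely notational changes, so once that bookkeeping is carried out the proposition follows.
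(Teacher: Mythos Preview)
Your proposal is correct and aligns with the paper's own treatment: the paper does not give an independent proof of Proposition~\ref{prop:wc} but simply states that it is a part of \cite[Proposition~3.1]{p-Valdebenito:quadratic} adjusted to the present periodic-radial setting, having already remarked that the general technical results from \cite{p-Valdebenito,p-Valdebenito:quadratic} carry over with straightforward modifications. Your sketch of how that transfer works---verifying the spectral splitting, the resolvent estimates via self-adjoint functional calculus for $A_1(s)$, and the $C^{K+1}$ smoothness of the Nemytskii operator---is exactly the content of those earlier proofs, so there is nothing to add.
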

In the sequel, $W_c(s)$ is called the \emph{center
  manifold} and equation \eqref{eq:reduced2}  the \emph{reduced
  equation}.

\vspace{8pt}

Next, we examine  the Hamiltonian structure of the reduced equation. 
For $(u,v)\in Z$ and any fixed $s\in B_\de$, let
\begin{equation}\label{eq:originalHam}
H(u,v)=\int_{\RS}\left(\frac{-1}{2}|\nabla
u(x)|^2+\frac{1}{2}a(x;s)u^2(x)+F(x,u(x);s)+\frac{1}{2}v^2(x)\right)dx,
\end{equation}
where
\[
F(x,u;s)=\int_0^u f_1(x,\vartheta;s)d\vartheta.
\]
Equation \eqref{eq:abstractsys} has a formal Hamiltonian structure
with respect to the functional $H$ and  this
structure is inherited in a certain way by the reduced equation. 
More specifically, denoting by $\Phi$ the composition of the maps 
$(\xi,\eta)\to \sigma(\xi,\eta;s):\R^{2n}\to Z$ and $H:Z\to \R$,  
\eqref{eq:reduced2} is the Hamiltonian 
system with respect to the Hamiltonian $\Phi$ 
and a certain symplectic structure defined  
in a neighborhood of $(0,0)\in\R^{2n}$. 
This is a consequence of general results
of \cite{Mielke:bk}; in 
\cite{p-Valdebenito} we  gave 
a proof, with some additional useful 
information, using direct explicit computations. 
We have then  transformed the system by
performing several coordinate changes.
By the first one, we achieve that, near the origin,  in the new coordinates
$(\xi',\eta')$ the system is Hamiltonian with respect to
(the transformed Hamiltonian) and the standard symplectic form on
$\R^{2n}$, $\sum_i\xi'_i\wedge \eta'_i$.
The existence of such a local transformation is guaranteed by 
the Darboux theorem, but in 
\cite{p-Valdebenito}
we took some care to keep track of how the symplectic structure and
the Darboux transformation depend on the parameters. 
We showed in particular that 
the Darboux transformation can be chosen as
a $ C^K$ map in $\xi$, $\eta$, and $s$, which is 
the sum of the identity map on $\R^{2n}$ and terms of order
$\order(|(\xi,\eta)|^3)$.  
In the  coordinates $(\xi',\eta')$ 
resulting from such a transformation, the
Hamiltonian takes the
following form for $(\xi',\eta')\approx (0,0)$:
\begin{equation} 
\label{formPhip}
\Phi(\xi',\eta';s) 
=\frac{1}{2}\sum_{j=1}^n (-\mu_j(s)(\xi'_j)^2+(\eta'_j)^2)
+\Phi'(\xi',\eta';s).
\end{equation}
Here, $\mu_j(s)$ are the negative eigenvalues of $A_1(s)$, as above,
and $\Phi'$ is a function of class $ C^K$ in all its arguments
and of order $\order(|(\xi',\eta')|^3)$ as $(\xi',\eta')\to (0,0)$.
We remark that the formulas given for $\Phi$ in 
\cite{p-Valdebenito,p-Valdebenito:quadratic}
are a bit longer, specifying in particular
the cubic terms of $\Phi$, but those more precise expressions
are not needed here. 

We now make a canonical (that is, symplectic form
preserving) linear transformation defined by
\begin{equation}\label{eq:symplin}
\xi'_j=\frac{1}{\sqrt{\omega_j(s)}}\xi_j,\quad
\eta'_j=\sqrt{\omega_j(s)}\,\eta_j\quad(j=1,\dots,n), 
\end{equation}
where $\omega_j(s):=\sqrt{|\mu_j(s)|}$,  $j=1,\dots,n$, are as in
\eqref{eq:10}. (The coordinates 
$\xi$ and $\eta$ used here are not the same coordinates as in
Proposition \ref{prop:wc}.)
This transformation
puts the quadratic part of $\Phi$ in the ``normal form:'' in the
coordinates $(\xi,\eta)$,
\begin{equation}
  \label{eq:11}
  \Phi(\xi,\eta;s):=\frac{1}{2}\sum_{j=1}^n
\omega_j(s)(\xi_j^2+\eta_j^2)+\hat \Phi(\xi,\eta;s),
\end{equation}
where $\hat \Phi$ is a function of class $ C^K$
and of order $\order(|(\xi,\eta)|^3)$ as $(\xi,\eta)\to (0,0)$.

Later, we will also use
the \emph{action-angle} variables 
$J=(J_1,\dots,J_n)\in \R^n$, 
$\theta=(\theta_1,\dots,\theta_n)\in\torus^n$.
They are defined by
\begin{equation}
  \label{eq:12}
  (\xi_j,\eta_j)=\sqrt{2J_j}(\cos\theta_j,\sin\theta_j)
\end{equation}
in regions where  $J_j=(\bar\xi_j^2+\bareta_j^2)/2>0$
for all $j\in\{1,\dots,n\}$.
In these coordinates, the Hamiltonian  $\Phi$ in \eqref{eq:11} 
takes the form
\begin{equation}\label{eq:PhiThetaI}
  \Phi(\theta,J;s)=\omega(s)\cdot J+\hat\Phi(\theta,J;s)
\end{equation}
(with the usual abuse of notation: $\hat\Phi(\theta,J;s)$ actually stands
for $\Phi(\xi(\theta,J),\eta(\theta,J);s)$). The  change of
coordinates from $(\xi_j,\eta_j)$ to $(\theta,J)$ is also
canonical. In particular, in these coordinates the reduced equation
reads as follows: 
\begin{equation}
  \label{eq:28}
  \begin{aligned}
     \dot \theta&= \nabla_J \Phi(\theta,J;s),\\  
     \dot J&=-\nabla_\theta \Phi(\theta,J;s).
  \end{aligned}
\end{equation}

The above Hamiltonian structure is the structure we use below in the proof
of Theorem \ref{thm:new}. We remark that another structure we could
use instead is the reversibility of \eqref{eq:abstractsys}:
if $(u_1(x,y),u_2(x,y))$ a solution, so is
$(u_1(x,-y),-u_2(x,-y))$). This reversibility structure
is also inherited by the reduced equation (see
\cite{Haragus-I:bk, Mielke:bk}). More specifically, writing the
equation as an ODE on $\R^{2n}$,
there is a transformation $D$ on $\R^{2n}$ such that $D^2$ is the 
identity map on $\R^{2n}$
and $D$ anticommutes with the right-hand side of the ODE.
(See Remark \ref{rm:discussion} for additional comments on the
reversibility structure).

\subsection{KAM-type results for systems with parameters and
  completion of the proof of Theorem \ref{thm:new}}
\label{sec:appl} 

To prove Theorem \ref{thm:new}, we apply 
a  KAM-type result from \cite{Broer-H-T, Huitema:thesis}
to the reduced Hamiltonian \eqref{eq:PhiThetaI}.  
To recall that  result,
consider, for some positive integers $n$ and  $d$,
a Hamiltonian $H:\torus^n\times\Omega\times B\to\R$ given by 
\begin{equation}
\label{eq:Hamdec}H(\theta,I;s)=H^0(I;s)+H^1(\theta,I;s),
\end{equation}
where $\torus^n=\R^n/(2\pi\Z^n)$ is the $n$-dimensional torus
(so $H^1(\theta,I;s)$ is $2\pi$-periodic in
$\theta_1, \dots,\theta_n$), and
$\Om$, $B$ are bounded domains in $\R^n$, $\R^d$, respectively; $s\in
B$ acts as a
parameter. We assume that $H^0$ is (real) analytic on
$\Om\times B$ and $H^1:\torus^n\times\Omega\times B\to\R$ is of class
$C^k$ for some $k\ge 2$. 

The Hamiltonian system corresponding to $H$ is
\begin{equation}
  \label{eq:28par}
  \begin{aligned}
     \dot \theta&= \nabla_I H(\theta,I;s),\\  
     \dot I&=-\nabla_\theta H(\theta,I;s),
  \end{aligned}
\end{equation}
and the one corresponding to  $H^0$,
\begin{equation}
  \label{eq:280}
  \begin{aligned}
     \dot \theta&= \nabla_I H^0(I;s),\\  
     \dot I&=0.
  \end{aligned}
\end{equation}
 We denote by $\om^*$ 
 the \emph{frequency map} of $H^0$:
 \begin{equation}
   \label{eq:9}
 (I;s)\mapsto \omega^*(I;s):=(\nabla_I H^0(I;s))^T: \Omega\times B\to \R^{n}.
\end{equation}
Here and below we view the gradient as a row vector, so
$\omega^*(I;s)$ is a column vector.

For each $s\in B$, the system  \eqref{eq:280} is completely \emph{integrable}.
Its state space is covered by invariant tori 
$\T^n\times \{I_0\}$, $I_0\in \Om$, and any such torus is  filled with 
trajectories of quasiperiodic solutions whenever the vector
$\omega^*(I_0;s)$ is nonresonant. As usual,
for the persistence of some 
of these quasiperiodic tori under the perturbation in
\eqref{eq:Hamdec}, we introduce a class of Diophantine frequencies.  
A vector $\om\in \R^n$ is said to be 
$\kappa,\nu$\emph{-Diophantine},
for some $\kappa>0$ and $\nu>n-1$, if 
\begin{equation}
\label{eq:omegaDioph} |\omega\cdot\alpha|\geq \kappa
|\alpha|^{-\nu}\quad
(\alpha\in \Z^n\setminus\{0\}).
\end{equation}
Fixing $\nu>n-1$ arbitrarily, for any nonempty bounded open
set $V\subset \R^n$ and
$\kappa>0$, we define 
\begin{equation}\label{eq:vkappa}
V_\kappa:=\{\omega\in V:\dist(\omega,\partial V)\geq\kappa\text{ and
}\omega\text{ is }\kappa,\nu\text{-Diophantine}\}. 
\end{equation}
It is well known that for small $\ka>0$ the Lebesgue measure,
$|V_\kappa|$, of $V_\ka$ is positive; in fact,  $|V\setminus V_\kappa|\to 0$ as
$\ka\downto0$.

As a nondegeneracy assumption, we shall require the frequency map
$$\omega^*(I,s)=(\om^*_1(I,s),\dots, \om^*_n(I,s))^T$$
to have surjective derivative:
\begin{itemize}[align=left]
\item[\bf (NDsI)]  The 
  $n\times (n+d)$ matrix 
  \begin{equation*}
  \nabla_{I,s}\om^*(I,s)=  \begin{bmatrix}
    \   \nabla_{I,s}\om^*_1(I,s)\\
      \vdots \\
    \   \nabla_{I,s}\om^*_n(I,s)\ 
    \end{bmatrix}
  \end{equation*}
has rank $n$ for all $(I,s)\in\Om\times B$. 
\end{itemize}
Note that this assumption implies  that the range of $\om^*$,
$V=\om^*(\Om\times B)$, is an open set in $\R^n$.   

The perturbation term $H^1$ will be assumed to
have a sufficiently small norm $ C^k$-norm
$\|H^1\|_{ C^k(\torus^n\times\Omega\times B)}$
which stands for the smallest upper bound, over $\torus^n\times\Omega\times B$,
on the moduli of all  derivatives of $H^1$ of orders 0 through  $k$.

\begin{thm}\label{thm:BHT} Let $H^0$,
  $\om^*$ be as above and
  $V:=\om^*(\Om\times B)$. Assume that \emph{(NDsI)} holds and
  let $\nu>n-1$ be fixed.  If   $k_0=k_0(\nu)$ is a sufficiently large
  integer, then the following statement holds.
  For every $\kappa>0$
  there is $\vartheta>0$ such that for an arbitrary $ C^k$-map
  $H^1:\torus^n\times\Omega\times B\to\R$ with $k\ge k_0$ and
  $\|H^1\|_{ C^k(\torus^n\times\Omega\times B)}<\vartheta$
  the Hamiltonian $H^0+H^1$ has the following property. 
  There is a $C^1$ map
  \begin{equation*}
    \Psi: \torus^n\times\Omega\times B\to \torus^n\times\R^n\times \R^d
  \end{equation*}
 of the form
  \begin{equation}
    \label{eq:2}
    \Psi(\theta,I,s)=(T(\theta,I,s), \Upsilon(I,s)),\quad
    T(\theta,I,s)\in \torus^n\times\R^n, \ \Upsilon(I,s)\in\R^d,
  \end{equation}
    which is a near-identity diffeomorphism onto its image and such that
    for any $(I_0,s_0)\in\torus^n\times\Omega$ with 
  $\om^*(I_0,s_0)\in V_\ka$ the manifold
  \begin{equation}
    \label{eq:3}
    \tilde\torus_{(I_0,s_0)}:=\{T(\theta,I_0,s_0):\theta\in \torus^n\}
  \end{equation}
  is invariant under the flow of \eqref{eq:28par} with $s= \Upsilon(I_0,s_0)$
  and the solution of 
\eqref{eq:28par} with the initial condition 
$T(\theta_0,\om^*(I_0,s_0))$, $\theta_0\in \torus^n$,  is given by 
$T(\theta_0+\om^*(I_0,s_0)t,\om^*(I_0,s_0))$, $t\in\R$. 
\end{thm}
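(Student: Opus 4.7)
My plan is to reduce the statement to a classical KAM theorem in which the frequency vector itself plays the role of the parameter, then invoke a finite-differentiability parametric KAM theorem in the spirit of P\"oschel and Broer-Huitema-Takens. The reduction uses \textbf{(NDsI)} to reparametrize: the condition says $(I,s) \mapsto \om^*(I,s)$ is a submersion from $\Om \times B \subset \R^{n+d}$ onto an open subset of $\R^n$, so by the constant-rank theorem there are local smooth coordinates $(\om,\la)\in \R^n\times\R^d$ on a neighborhood of each $(I_0,s_0)$ in which this map becomes the projection $(\om,\la) \mapsto \om$. Rewriting $H$ in these coordinates and translating $I$ appropriately produces a Hamiltonian
\begin{equation*}
\widetilde H(\theta, I; \om, \la) = \om\cdot I + \order(|I|^2) + \widetilde H^{1}(\theta, I; \om, \la),
\end{equation*}
in which $\om$ is now a genuine external frequency parameter and $\la$ is a silent smooth parameter of dimension $d$.

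For each fixed $\la$ and each $\om \in V_\ka$, I would run the standard Newton-type KAM iteration: at each step, solve a homological equation using the Diophantine bound $|\om\cdot\al|^{-1}\le \ka^{-1}|\al|^\nu$ for $\al\in \Z^n\setminus\{0\}$, and iterate until the resulting symplectic near-identity transformation $\Psi_{\om,\la}$ conjugates the flow restricted to $\{I=0\}$ to the linear flow $\theta\mapsto\theta+\om t$. Smallness of $\|H^1\|_{C^k}$ below a threshold $\vartheta=\vartheta(\ka)$ is what makes the iteration converge, and smoothness of $\Psi_{\om,\la}$ in $\la$ is inherited from that of $\widetilde H^1$. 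Because $H^1$ is only $C^k$ rather than analytic, the iteration must be run on a sequence of analytic approximants on complex strips of widths $\sigma_j\downto 0$ (the Jackson-Moser-Zehnder scheme), with quantitative $C^j$-error bounds; tracking the derivative loss per step fixes the explicit integer $k_0(\nu)$, which grows linearly in $\nu$.

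The Cantor-indexed family $\{\Psi_{\om,\la}\}_{\om\in V_\ka}$ is Whitney-$C^1$ in $\om$ and smooth in $\la$, and a Whitney extension produces a $C^1$ map on a full neighborhood of $V_\ka$. Undoing the reparametrization assembles the local constructions into the global map
\begin{equation*}
\Psi(\theta,I,s) = (T(\theta,I,s),\,\Upsilon(I,s)),
\end{equation*}
where $\Upsilon(I,s)$ encodes the parameter value realizing the target frequency $\om^*(I,s)$. The invariance of $\tilde\torus_{(I_0,s_0)}$ for $\om^*(I_0,s_0)\in V_\ka$ and the linearity of the flow on it are inherited directly from the KAM conclusion in the reparametrized coordinates.

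The main obstacle is the triple bookkeeping that must be carried through simultaneously: (i) Whitney regularity across the Cantor-like set $V_\ka$ of Diophantine frequencies; (ii) $C^{K+1}$ dependence on the silent parameter $\la$; and (iii) the finite-differentiability accounting of derivative losses per KAM step that pins down the required $k_0(\nu)$. Each of these is standard in isolation, but their combination---particularly the interplay between the Whitney direction $\om$ and the ordinary smooth direction $\la$ inside a single iteration---is the core technical content of the Broer-Huitema-Takens / Huitema theory.
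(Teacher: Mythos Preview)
The paper does not give its own proof of this theorem: immediately after the statement it writes that Theorem~\ref{thm:BHT} is a special case of results in \cite{Broer-H-T} (Corollary~5.1 and Section~5c for the analytic case, with the finite-differentiability adjustments in the appendix of \cite{Broer-H-T} and in \cite{Huitema:thesis}), and notes that it extends P\"oschel's result \cite{Poschel:integrability} for $d=0$. In other words, the theorem is quoted as a black box from the literature and then applied.

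Your proposal is therefore not a competing proof but a sketch of what the cited Broer--Huitema--Takens argument actually contains. As such it is broadly accurate: the reparametrization using \textbf{(NDsI)} to make the frequency vector an explicit parameter, the KAM iteration with Diophantine small divisors, the Jackson--Moser--Zehnder analytic-smoothing scheme to handle $C^k$ rather than analytic data, and the Whitney extension across $V_\kappa$ are indeed the ingredients of \cite{Broer-H-T, Huitema:thesis, Poschel:integrability}. One caveat: your sketch is local near each $(I_0,s_0)$ via the constant-rank theorem, and you gloss over the patching needed to obtain a single global $C^1$ map $\Psi$ on all of $\torus^n\times\Omega\times B$; in the actual references this is handled by working directly with the frequency as parameter on a global level rather than via local charts. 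But since the paper simply cites the result, there is nothing here for you to match or improve upon---you are reconstructing the contents of the reference, not of the paper.
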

This is a special case of a theorem from \cite{Broer-H-T}: see
Corollary 5.1 and 
Section 5c in \cite{Broer-H-T} for a version of the theorem for
analytic Hamiltonians; the adjustments needed
in the proof for finitely differentiable Hamiltonians are
indicated in the appendix of \cite{Broer-H-T} (see also
\cite{Huitema:thesis}; statements of the theorem and related results
can also be found in  \cite{Broer-H-N, Sevryuk:KAM-stable}). 
The theorem is an extension of a result of
\cite{Poschel:integrability} for a Hamiltonian without parameters
(that is, $d=0$), in which case 
condition (NDsI) is the same as the 
Kolmogorov nondegeneracy condition.

\begin{rmkn}
  \label{rm:BHT}
  \begin{itemize}[align=left,itemindent=3ex,leftmargin=0pt]
  \item[(i)] By saying that $\Psi$ is a near-identity diffeomorphism
    we mean that the $ C^1$ norm of the difference of $\Psi$ and
    the identity on $\torus^n\times\Omega\times B$ is less than 1. One
    can additionally say that the norm becomes arbitrarily small
    as $\vartheta\to0$. 
  \item[(ii)]  Since $V_\ka$ consists of nonresonant vectors, the solution
 $$t\mapsto T(\theta_0+\om^*(I_0,s_0)t,\om^*(I_0,s_0))$$
 is quasiperiodic with
 the frequency vector $\om^*(I_0,s_0)$. The set of the
 frequencies of these solutions,  $V_\ka$, has positive measure if
 $\ka$ is sufficiently small. 
 \item[(iii)]
   Specific estimates as to how large  $k_0=k_0(\tau)$ has to be
   are available.
  As noted in \cite[Appendix]{Broer-H-T}, a sufficient but not
  optimal condition is $k_0>4\nu+2$. Thus, if a regularity class
  $ C^k$ with $k>4n-2$ is given upfront,
  one can always pick $k_0\le k$ and $\nu>n-1$ so
  that $k_0>4\nu+2$ and then Theorem \ref{thm:BHT} applies
  with such choices of $\nu$ and $k_0$. 
  We also remark that the diffeomorphism $\Psi$ is more regular
  than $C^1$ and its smoothness increases with  $k$  
(see \cite{Poschel:integrability} for more precise differentiability 
assumptions on the Hamiltonian and the corresponding 
regularity properties of the map $T$ in the case $d=0$).
  \end{itemize}
\end{rmkn}

In our application of Theorem \ref{thm:BHT}, we  consider 
a Hamiltonian $G:\torus^n\times  \Omega\times
B\to\R$ given by 
\begin{equation}
\label{eq:HamG}G(\theta,I;s)=\om(s)\cdot I+G^1(\theta,I;s),
\end{equation}
where $s\mapsto\om(s):B\to \R^n$ is a $C^1$ map satisfying
the following condition.
\begin{itemize}[align=left]
\item[\bf (NDs)] The
  $n\times (d+1)$ matrix
$$\left[\ \nabla\om(s)\quad \om(s)\ \right]$$
has rank $n$ for all $s\in B$. 
\end{itemize}

Note that this is the type of condition satisfied locally
by the frequencies in our elliptic problem, 
see condition (ND) in Section \ref{sec:mainres}. 

We will take
the \emph{linear} function $G^0(I;s)=\om(s)\cdot I$ as the
unperturbed integrable Hamiltonian and view $G^1$ as a small
$C^k$ perturbation. We relate the Hamiltonians $G$ and
$H$---and conditions (NDs) and (NDsI)---in the following lemma. In the
simplest case, when $s\mapsto\om(s)$ is analytic and $\nabla \om(s)$ alone has
rank $n$, we can simply take $H^0=G^0$. This leads to a very similar
setup, with the frequencies serving as parameters,
as in \cite{Poschel:lecture} where a parametrization by frequencies
is used in the  proof of a classical KAM theorem
(see also  \cite{Moser:series} for an earlier use of a 
``parametrization'' technique). In other cases, some
 ``tricks'' will be used to accommodate $G^0$
in the setting of Theorem \ref{thm:BHT}.

\begin{lemma}
  \label{le:NDs} Fix $\nu>n-1$ and let $k_0=k_0(\nu)$ be
  as in Theorem \ref{thm:BHT}. Given any $k\ge k_0$, assume that
  $s\mapsto\om(s):B\to \R^n$ is a $ C^k$
 map satisfying \emph{(NDs)}. Then there is $\vartheta>0$ such that 
 for an arbitrary $ C^k$-map
  $G^1:\torus^n\times\Omega\times B\to\R$ with
  $\|G^1\|_{ C^k(\torus^n\times\Omega\times B)}<\vartheta$
  the Hamiltonian $G:=G^0+G^1$ has the following property.
  There is an uncountable set  $W\subset \R^n$
 consisting of rationally independent vectors, no two of them
 being linearly dependent, such that for every 
 $\bar\om\in W$ the Hamiltonian system
\begin{equation} \label{eq:17}
  \begin{aligned}
     \dot \theta&= \nabla_I G(\theta,I;s),\\  
     \dot I&=-\nabla_\theta G(\theta,I;s)
  \end{aligned}
\end{equation}
has for some $s\in B$ a quasiperiodic  solution of the form
$T_s(\bar \om t)$, $t\in\R$, where $T_s:\torus^n\to \torus^n\times\Om$ is
a $C^1$ imbedding of the torus $\torus^n$.
\end{lemma}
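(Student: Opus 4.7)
The plan is to apply Theorem \ref{thm:BHT} to a time-rescaled version of $G$. A direct application is blocked because $G^0(I;s)=\om(s)\cdot I$ is linear in $I$, so $\om^*(I;s)=\om(s)$ and $\nabla_{I,s}\om^*=[0\ \nabla_s\om(s)]$; condition (NDsI) would then force $\nabla_s\om(s)$ alone to have rank $n$, which is strictly stronger than (NDs). The idea is to append a scalar scaling parameter so that the column $\om(s)$ appearing in (NDs) becomes a parameter derivative in the new problem.

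Fix $\epsilon_0\in(0,1)$, set $\tilde B:=B\times(-\epsilon_0,\epsilon_0)$, and define
\[
\tilde G(\theta,I;s,\tau):=(1+\tau)\,G(\theta,I;s)=(1+\tau)\om(s)\cdot I+(1+\tau)G^1(\theta,I;s),
\]
with the natural splitting $\tilde G=\tilde G^0+\tilde G^1$. The new frequency map $\tilde\om^*(I;s,\tau)=(1+\tau)\om(s)$ has Jacobian $\big[\,0,\ (1+\tau)\nabla\om(s),\ \om(s)\,\big]$; since $1+\tau>0$, the last $n\times(d+1)$ block has the same column span as $\big[\nabla\om(s),\om(s)\big]$, which has rank $n$ by (NDs). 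Thus (NDsI) holds on $\Om\times\tilde B$ for any bounded open $\Om\subset\R^n$, and $V:=\tilde\om^*(\Om\times\tilde B)=\{(1+\tau)\om(s):(s,\tau)\in\tilde B\}$ is an open subset of $\R^n$.

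Apply Theorem \ref{thm:BHT} to $\tilde G$ with $k_0=k_0(\nu)$ from that theorem, choosing $\|G^1\|_{C^k}$ small enough that $\|\tilde G^1\|_{C^k}<\vartheta$. For each $\bar\om_0\in V_\kappa$ and any preimage $(I_0,s_0,\tau_0)\in\Om\times\tilde B$ under $\tilde\om^*$, the near-identity diffeomorphism $\Psi=(T,\Upsilon)$ produces parameters $(s^*,\tau^*):=\Upsilon(I_0,s_0,\tau_0)$ close to $(s_0,\tau_0)$ and an invariant torus of the perturbed $\tilde G$-flow at $(s^*,\tau^*)$ on which the motion is quasiperiodic with frequency $\bar\om_0$. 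Because $\tilde G=(1+\tau^*)G$ at this parameter, the $\tilde G$- and $G$-flows have identical orbits and differ only by the time rescaling $t_G=(1+\tau^*)t_{\tilde G}$. The same torus is then invariant for the $G$-system at parameter $s^*\in B$ (which lies in $B$ provided $B$ is shrunk slightly at the outset, using the near-identity property of $\Upsilon$), and it carries a quasiperiodic solution of $G$ with frequency $\bar\om:=\bar\om_0/(1+\tau^*)$ in $G$-time; the imbedding $T_s$ required in the statement is the obvious reparametrization of $\theta\mapsto T(\theta,I_0,s_0,\tau_0)$.

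Finally, to construct $W$, note that each $G$-frequency $\bar\om=\bar\om_0/(1+\tau^*)$ has the same direction as $\bar\om_0$ (since $1+\tau^*>0$) and is Diophantine, hence rationally independent (positive scaling of a Diophantine vector yields another Diophantine vector, with renormalized constant). The set $V_\kappa$ has positive Lebesgue measure inside the open set $V\subset\R^n$, so by Fubini in polar coordinates its image in $\R P^{n-1}$ under the canonical projection has positive $(n-1)$-dimensional measure, hence is uncountable. Selecting one representative $\bar\om$ per direction from an uncountable subfamily of these projected images produces the desired set $W$ of pairwise linearly independent, rationally independent vectors, each realized as the frequency vector of a quasiperiodic solution of \eqref{eq:17} at some $s\in B$. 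The main obstacle is the bookkeeping around the time rescaling — ensuring $s^*\in B$, identifying the correct $G$-frequency, and confirming that uncountably many distinct directions survive in the $G$-frequency set — all of which follow from the near-identity property of $\Psi$ together with the submersion property of $\tilde\om^*$.
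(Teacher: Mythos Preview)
Your overall strategy coincides with the paper's: introduce a scalar scaling parameter so that the column $\om(s)$ in (NDs) becomes a parameter derivative, then invoke Theorem~\ref{thm:BHT} and undo the time rescaling. In fact, by always appending the extra parameter you avoid the paper's case split (rank $n$ versus rank $n-1$), which is a nice streamlining. The bookkeeping you do on the rescaled frequencies and on the uncountability of directions in $V_\kappa$ is fine and matches the paper's.

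There is, however, a genuine gap. Theorem~\ref{thm:BHT} as stated requires the integrable part $H^0$ to be \emph{real analytic} on $\Omega\times B$; only the perturbation $H^1$ is allowed to be merely $C^k$. Your $\tilde G^0(I;s,\tau)=(1+\tau)\,\om(s)\cdot I$ is only $C^k$ in $s$, since $\om$ is only assumed $C^k$, so you cannot apply Theorem~\ref{thm:BHT} to the pair $(\tilde G^0,\tilde G^1)$ directly. This is not a cosmetic issue: it is precisely the technical point the paper isolates and treats separately. The paper's fix is to use the submersion property you already established: after dropping disposable parameters and shrinking the parameter domain, the map $(s,\tau)\mapsto(1+\tau)\om(s)$ (or, in the rank-$n$ case, $s\mapsto\om(s)$) is a local $C^k$ diffeomorphism onto an open set $V$, with $C^k$ inverse $\upsilon$. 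One then takes the frequency vector $\bar\om\in V$ itself as the parameter, so that the integrable part becomes $H^0(I,\bar\om)=\bar\om\cdot I$, which \emph{is} analytic, while the perturbation becomes $H^1(\theta,I,\bar\om)=\tilde G^1(\theta,I;\upsilon(\bar\om))$, which is $C^k$ and inherits the required smallness. With this reparametrization inserted before your appeal to Theorem~\ref{thm:BHT}, your argument goes through.
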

\begin{proof}
  First  assume that $\nabla \om(s)$ has
  rank $n$ for all $s\in B$ and $s\mapsto\om(s)$ is analytic.
  Taking $H^0(I;s):=G^0(I;s)=\om(s)\cdot I$ for all $I\in\Om$,
$s\in B$, we immediately see that condition (NDsI) is satisfied
with $\om^*(I,s)=\om(s)$ (cp.~\eqref{eq:9}). Let $V$ be the image of
$B$ under the map $s\to\om(s)$. This is an open set in
$\R^n$,  hence for $\kappa>0$ small enough,
the set $V_\kappa$ has positive measure.
Fix such $\ka$ and let
$\vartheta=\vartheta(\kappa)$ be as in Theorem \ref{thm:BHT}.
We claim that the conclusion of Lemma \ref{le:NDs} holds with this
$\vartheta$. Indeed, if $G^1$ satisfies the smallness condition, then
Theorem \ref{thm:BHT} with $H^1=G^1$ tells us that the conclusion
of Lemma \ref{le:NDs} regarding \eqref{eq:17} holds for any
$\bar\om \in V_\ka$: we simply
choose $s_0$ with $\om(s_0)=\bar\om$
and then, with an arbitrary $I_0\in\Om$, take $s=\Upsilon(s_0,I_0)$ and
define $T_s:=T(\cdot,I_0,s_0)$. So to complete the proof in the present
case, we just need  find an uncountable
subset $W$ of $V_\ka$ such no two vectors of $W$ are linearly
dependent. Such a set exists because, as $V_\ka$ has positive measure,
there are uncountably many lines through the origin that intersect
$V_\ka$. Thus, we can pick a unique vector from $V_\ka$ in any such
line to form the set $W$.

 Next, still assuming that $\nabla \om(s)$ has
 rank $n$, we remove the analyticity assumption:
 $ \om(s)$ is now of class $ C^k$. 
 We  make, without loss of generality,
 a simplifying assumption that $d=n$ and $\om$ is a
 diffeomorphism of $B$ onto its image $V$. This can always be achieved
 by replacing $B$ by a small neighborhood of some arbitrarily
 fixed $s^0\in B$  and dropping some ``disposable'' parameters. More precisely,
 relabeling the parameters $s_1,\dots,s_d$,
 we may assume that the matrix  
 \begin{equation*}
   \left[\ \partial_{s_1}\om(s)\dots \partial_{s_n}\om(s) \ \right]
 \end{equation*}
 has rank $n$ for all $s\approx s^0$. Then, if $d>n$,
 we consider only those $s\in B$ whose last $d-n$
 components, $s_{n+1},\dots,s_d$, are fixed and equal to the
 last $d-n$ components of $s^0$. Accordingly, we replace $B$ by a neighborhood
 $\tilde B$ of $s^0$ in the corresponding $n$-dimensional affine space. 
 With the number of parameters equal to $n$,
 the rank condition implies that $\om$ is a diffeomorphism, possibly
 after the neighborhood  $\tilde B$ of $s^0$ is made smaller. Of course, 
 proving the statement of the lemma with $B$ replaced by the smaller
 set $\tilde B$ trivially implies the original statement.

 The assumption that $\om:B\to V$ is a diffeomorphism
 allows us to reparameterize the problem, using the frequency vectors
 as parameters, in such a way that the linear 
 integrable part becomes analytic in the parameters. For that we
 denote by  $\upsilon:V\to B$ the inverse to $\om(s)$; this is a
 $ C^k$ map. Let again $\ka>0$ be so small that $V_\ka$ has
 positive measure.  
 Clearly,  Theorem \ref{thm:BHT} applies to
 the integrable Hamiltonian $H^0(I,\bar\om):=\bar \om \cdot I$, $I\in
 \Om$, $\bar\om\in V$, and the perturbation 
 $H^1(\theta,I,\bar\om):=G^1(\theta,I,\upsilon(\bar\om))$, provided
  $G^1:\torus^n\times\Omega\times B\to\R$ has sufficiently small
  $ C^k$-norm. This implies the conclusion of
  Lemma \ref{le:NDs} (we choose a subset $W\subset V_\ka$ with the required
  properties as in the first part of the proof). 
  Thus Lemma \ref{le:NDs} is proved in the case that $\nabla \om(s)$
  has rank $n$.

  Finally, we take on the case of the rank of $\nabla \om(s)$ being
  less than $n$; by (NDs), the rank has to be equal to $n-1$,
  with the vector
  $\om(s)$ outside the range of  $\nabla \om(s)$ for each $s\in B$.
  We introduce an extra real parameter $\be\approx 1$, so the
  parameter set becomes $B\times (1-\ep,1+\ep)$ for a small
  $\ep>0$. Consider the linear integrable Hamiltonian
  $\tilde G^0(I;s,\be):=\be \om(s)\cdot I$ and  the perturbation 
  $\tilde G^1(I,\theta;s,\be):=\be G^1(\theta,I,s)$.
  Due to (NDs), the gradient matrix
  \begin{equation*}
    \nabla_{s,\be}(\be\om(s))= \left[\ \be\nabla_{s} \om(s)\quad \om(s) \ \right]
  \end{equation*}
  has rank $n$ for all $(s,\ep)\in B\times
  (1-\ep,1+\ep)$ if $\ep>0$ is small enough, which we will henceforth
  assume. 

  Thus, the part of the statement of Lemma \ref{le:NDs} already
  proved above applies to $\tilde G^0$, $\tilde G^1$, provided
  $G^1:\torus^n\times\Omega\times B\to\R$ has sufficiently small
  $ C^k$-norm. This
  yields a set  $\tilde W\subset \R^n$
 consisting of rationally independent vectors, no two of them
 being linearly dependent, such that for every 
 $\bar \om\in \tilde W$ the Hamiltonian system
\begin{equation} \label{eq:17b}
  \begin{aligned}
     \dot \theta&= \be \nabla_I G(\theta,I;s),\\  
     \dot I&=-\be \nabla_\theta G(\theta,I;s),
  \end{aligned}
\end{equation}
has for some $s\in B$, $\be\in (1-\ep,1+\ep)$ a  quasiperiodic
solution with frequency vector $\bar \om$. Noting that \eqref{eq:17b} is just
\eqref{eq:17} with rescaled time, we get the desired conclusion for
\eqref{eq:17} with a set $W$ obtained from $\tilde W$ by multiplying
each element $\bar\om\in \tilde W$ by a scalar
$\be=\be(\bar\om)\approx 1$. The vectors obtained this way are
mutually distinct, due to 
the properties of $\tilde W$, so $W$ is still uncountable, and the
pairwise linear independence is obviously preserved as well.
The lemma is proved. 
\end{proof}

We remark that for the matrix $\nabla \om(s)$ to have rank $n$, we
would need  $d\ge n$. Hypothesis
(NDs), on the other hand, only requires $d\ge n-1$, which ``saves'' us
one parameter.

We are now ready to complete  the proof of Theorem
\ref{thm:new}.
\begin{proof}[Proof of Theorem \ref{thm:new}]
  We return to the Hamiltonian of the reduced equation
  (see \eqref{eq:11} and \eqref{eq:PhiThetaI}). In the coordinates
  $(\xi,\eta)$,
\begin{equation}
  \label{eq:11a}
  \Phi(\xi,\eta;s):=\frac{1}{2}\sum_{j=1}^n
\omega_j(s)(\xi_j^2+\eta_j^2)+\hat \Phi(\xi,\eta;s),
\end{equation}
and in the action-angle variables $J=(J_1,\dots,J_n)\in \R^n$,
  $\theta=(\theta_1,\dots,\theta_n)\in\torus^n$ (cp.~\eqref{eq:12}),
  \begin{equation}
    \label{eq:13}
  \Phi(\theta,J;s)=\omega(s)\cdot J+\hat\Phi(\theta,J;s).
\end{equation}
Here, $J$ is taken near the origin and such that
$J_j>0$ for all $j\in\{1,\dots,n\}$,
and $s\in B_\de\subset \R^d$, for some $\de>0$.  

Recall that $\hat \Phi(\xi,\eta;s)$ is of class $C^K$ on a
neighborhood of the origin in $\R^{2n}\times \R^d$ 
and of order $\order(|(\xi,\eta)|^3)$ as $(\xi,\eta)\to (0,0)$.
Therefore, by Taylor's theorem, $\hat \Phi(\xi,\eta;s)$
can be written as the sum of finitely many terms, each of them being
the product of a degree-three monomial in $\xi,\eta$
and a $ C^{K-3}$ function of $\xi$, $\eta$, $s$.
The function $\hat \Phi(\theta,J;s)$ is obtained
from this sum by substituting
$$(\xi_j,\eta_j)=\sqrt{2J_j}(\cos\theta_j,\sin\theta_j)\quad(j=1,\dots,n)$$
(which introduces some singular behavior in the derivatives of
$\hat \Phi(\theta,J;s)$ as $J\to 0$). In these action-angle variables,
$\hat\Phi$ is of order $\order(|J|^{3/2})$ as $|J|\to 0$.

Recall also that $\om(s)\in \R^n$ is as in \eqref{eq:10} and it is of
class $C^{K+1}$ as a function of $s$. 

Fix constants $k_0\le K-3$ and $\nu>n-1$, $k_0$ being an integer,
such that $k_0>4\nu +2$.
This is possible due to \eqref{eq:Kn}. According to Remark \ref{rm:BHT}(iii), 
Theorem \ref{thm:BHT} applies with these choices of $\nu$ and $k_0$.
We introduce the scaling $J=\ep I$ with $\ep \in (0,1)$, $I\in
\Om$, where
  \begin{equation}
    \label{eq:30}
\Om:=\{I\in\R^n:\ q\leq I_j\leq 2q\ \,(j=1,\dots,n)\}
\end{equation}
and $q$ is some positive constant, which we fix for the rest of the proof.
Now define $G^0$, $G^1$ on  $ \torus^n\times\Om \times B_\de$ by
\begin{equation}
  \label{eq:16}
  \begin{aligned}
  G^0(I;s)&:=\om(s)\cdot I, \\
  G^1(\theta,I;s)&:=\frac1\ep\, \hat
  \Phi(\theta,\ep I;s),
\end{aligned}
\end{equation}
which is legitimate for all sufficiently small  $\ep>0$ (below we will
make an additional smallness requirement on $\ep$). We set $G:=G^0+G^1$.

Observe that  
$G(\theta, I;s)=\Phi(\theta,\ep I;s)/\ep$,
which is the right Hamiltonian for
the rescaled reduced equation \eqref{eq:28}: the Hamiltonian
system corresponding to the Hamiltonian $G$ in the standard symplectic
form is the same as the system obtained from \eqref{eq:28} after the
substitution $J=\ep I$ (and it is of course the same as
the Hamiltonian system of  $\Phi$ with respect to the transformed
symplectic form corresponding to the 
\emph{noncanonical} coordinate transformation $(I,\theta)=(\ep
J,\theta)$).  

We are now going to apply Lemma \ref{le:NDs} to the Hamiltonian
$G=G^0+G^1$, with $\ep>0$ sufficiently small. Take $k:=K-3\ge k_0$. 
The smoothness hypotheses of Lemma \ref{le:NDs}
on $s\to\om(s)$ and $G^1$ are then satisfied.
Hypothesis (NDs)  is verified, possibly after $\de>0$ is
made smaller, due to hypothesis  (ND) in Section \ref{sec:mainres}.
It remains to verify that the smallness requirement on $G^1$ is
met if $\ep>0$ is small enough. Consider any derivative
$D^\al\hat\Phi(\theta,J;s)$ of order at most $k$. Here $\al$ is a
multiindex in $\N^{2n+d}$. We denote by $\al_J$ the total number
of derivatives in $D^\al\hat\Phi$ taken with respect to the
$J$-variables. Using our previous observations on the asymptotic
behavior of $\hat 
\Phi$ as $J\to 0$ and taking into account the maximal singularity possibly
introduced by differentiating one of the roots $J_1^{1/2},
\dots,J_n^{1/2}$, we obtain that $D^\al\hat\Phi(\theta,J;s)$ is of
order $|J|^{3/2-\al_J}$ as $|J|\to 0$. Therefore, taking the corresponding
derivative $D^\al$ in the variables $(\theta,I;s)$, we discover that
for some constant $C_\al$
\begin{equation*}
   |D_{\theta,I;s}^\al G^1(\theta,I;s)|=\frac1\ep\,
   \ep^{\al_J}|D^\al_{\theta,J;s} \hat 
 \Phi(\theta,\ep I;s)|\le C_\al \ep^{1/2}\quad
  ((\theta,I,s)\in\torus^n\times \Om\times B_\de).
\end{equation*}
This implies that if $\ep>0$ is sufficiently small, the condition
$\|G^1\|_{ C^k(\torus^n\times\Omega\times B)}<\vartheta$ of Lemma
\ref{le:NDs} is satisfied. 

Having verified all the hypotheses, and
fixing a small enough $\ep>0$,
we obtain  that 
the system \eqref{eq:17} has quasiperiodic solutions with frequencies
covering the set $W$, as stated in Lemma \ref{le:NDs}. The
trajectories of these solutions are contained in $\torus^n\times\Om$.
Undoing the $\ep$-scaling, we obtain quasiperiodic solutions of 
the reduced equation \eqref{eq:28} whose trajectories are contained in  
$\torus^n\times\ep\Om$. If so desired, we can adjust $\ep>0$
to guarantee that the trajectories are contained
in any given neighborhood of $\torus^n\times\{0\}$.

We now reverse the transformations made in  Section \ref{sec:reduced},
namely, the passage to the action-angle variables, transformation
\eqref{eq:symplin}, and the Darboux transformation, 
to get back to the reduced equation  \eqref{eq:reduced2}.
This yields quasiperiodic solutions of 
\eqref{eq:reduced2}, for the same values of $s$ as in
\eqref{eq:17}, whose frequencies vectors cover the same set $W$.
Moreover,
we can assume that the trajectories 
of these solutions  
are all contained in a small neighborhood of the origin
(we may need to adjust $\ep>0$ for this, as noted above). 
In particular, if  $z$ is any of these solutions, then 
 $z(y)\in \mathscr{N}$ for all $y\in\R$, 
$\mathscr{N}$ being the neighborhood of $0\in Z$ from Proposition
\ref{prop:wc}. Then, 
by Proposition \ref{prop:wc}(b), 
\[
U(y)=(U_1(y),U_2(y))^T=z(y)+\sigma(\{z(y)\}_\basis;s)\in Z
\]
is a  solution of system \eqref{eq:abstractsys}. Letting 
\begin{equation}
  \label{eq:34}
  u(x,y)=U_1(y)(x),
\end{equation}
we obtain a solution of
\eqref{eq:a1}. This solution is quasiperiodic in $y$,
$2\pi$-periodic and even in $x_N$, and radially symmetric in $x'$ (the
periodicity and symmetry come from the definition of the space $Z$). The
frequencies of the solutions obtained this way still
cover the same set $W$, which has the properties required in
Theorem \ref{thm:new}. 
It remains to show that each solution $u(x,y)$ obtained this way 
decays to 0 as $|x'|\to\infty$, uniformly in $x_N$ and $y$.
This is a direct consequence of the
fact that the set $\{u(\cdot,y):y\in \R\}$ is contained in a compact
set---continuous image of a torus---in  $H_\rade^{m+2}(\RS)$,
with $m >N/2$. 
\end{proof}

\begin{rmkn}
  \label{rm:discussion}
  As noted at the end of Section \ref{sec:reduced}, the reduced equation
  is reversible and this structure can be used instead of the
  Hamiltonian structure in the proof of Theorem \ref{thm:new}.
  Theorems for reversible systems analogous to Theorem \ref{thm:BHT}
  can be found in \cite{Broer-H-N, Broer-H:reversible,
    Sevryuk:KAM-stable}, for example, 
and a result analogous to our Lemma \ref{le:NDs}
can be derived from those. For analytic reversible systems,
Scheurle has proved the existence of quasiperiodic solutions
under the same nondegeneracy condition as (NDs), see
\cite{Scheurle:ode}.  
\end{rmkn}

\section{Proof of Theorem \ref{thm:main}} \label{proof2}

Assume the hypotheses of Theorem \ref{thm:main} to be satisfied. 
We derive the conclusion of  the theorem from Theorem
\ref{thm:new} with $n:=2$, $K:=10>4n+1$, $m:=\ell-14>N/2$, with $\ell$
as in  hypothesis (S). 
Note that $f$ is of class $ C^{K+m+4}$.

To put equation \eqref{eq:1} in the form \eqref{eq:a1},
we linearize a rescaled equation \eqref{eq:1} about a ground state.
Here we initially follow \cite{Dancer:new}.
Let $\varphi$ be a (radially symmetric) ground state of
\eqref{eq:gsb}, as in hypothesis (G).
As assumed in (G), the operator $-\De-f'(\varphi(x'))$ considered on
$L^2_\rad(\R^{N-1})$ with domain $H^2_\rad(\R^{N-1})$ has exactly one
nonpositive eigenvalue,  further denoted by $\mu_0$, and this
eigenvalue is negative and simple.  
For $\la>0$ set
$\varphi^\lambda(x'):=\varphi(\sqrt{\lambda} x')$. This is a ground
state of the rescaled equation
\begin{equation}\label{eq:gsr}
\Delta u+\la f(u)=0,\quad x'\in\R^{N-1}.
\end{equation}

In the following, we view $\varphi^\la$ as a function of $x\in\R^N$,
independent of $x_N$. Set
$$a^\la(x):=\la f'(\varphi^\la(x)).$$ 
We examine the Schr\"{o}dinger operator 
$A^\la:=-\Delta-a^\la(x)$ acting on $L^2_\rade(\RS)$ 
with domain $H^2_\rade(\RS)$. The function $a^\la$
has the limit $\la f'(0)$ as $|x'|\to\infty$,
which is negative due to hypothesis (S). As noted in Section
\ref{sec:mainres}, this implies that the essential
spectrum of $A^\la$ is contained in $[-\la f'(0),\infty)$. 
 Scaling and separation of variables show, as in
 \cite{Dancer:new}, that the following statements hold. The
 principal (minimal) eigenvalue of $A^\la$ is $\la\mu_0<0$
 with eigenfunction independent of $x_N$, and it is a simple
 eigenvalue. 
 If $\la$ is greater than but  close to $-1/\mu_0>0$,
 then the second eigenvalue is $\la\mu_0+1<0$ with eigenfunction of the
 form $\varsigma(|x'|)\cos x_N$ and it is also a simple eigenvalue.
 All  other eigenvalues (as well as the essential spectrum) of $A^\la$
 are positive.
 Fix any $\la> -1/\mu_0$, $\la\approx -1/\mu_0$, with these properties
 and set
 \begin{align}
   \label{eq:4}
   a(x;s)&:=a^{\la+s}(x)=(\la+s)f'(\varphi^{\la+s}(x)),\\
   f_1(x,u;s)&:=(\la+s)f(\varphi^{\la+s}(x)+u)-a(x;s)u. \label{eq:4b}
 \end{align}
 Here  $s\in (-\de,\de)=:B$, where we take $\de\in (0,\la)$ so small
 that for all
 $s\in [-\de,\de]$ 
 \begin{equation}
   \label{eq:5}
   \mu_1(s):=(\la+s)\mu_0<\mu_2(s):=(\la+s)\mu_0+1<0
 \end{equation}
and  $\mu_1(s)$, $\mu_2(s)$ are the only nonpositive eigenvalues of
  $-\De-a(x;s)$. Thus, the function $a(x;s)$ satisfies hypotheses
  (A1)(a) (with $L:=(\la-\de)f'(0)$) and (A2)(b) (with $n=2$).

  Obviously,
  $f_1$ satisfies \eqref{eq:f}, and the symmetry requirements in (S1),
  (S2) follow from the definitions of $a$, $f_1$, and the symmetry of
  $\varphi^{\la+s}(x')= \varphi(x'(\la+s)^{1/2})$. 
The verification of the smoothness requirements in (S1), (S2), with
$d=1$, is
straightforward (and is left to the reader)
when one uses the following claim: $\varphi$ is of
class $ C^{K+m+5}$ and all its derivatives up to order $K+m+5$
decay exponentially as $|x'|\to\infty$.
To prove this claim, we first note that, since $f$ is of class
$ C^{K+m+4}$, the fact that   $\varphi$ is of
class $ C^{K+m+5}$ (with locally H\"older derivatives of order
$K+m+5$) is a standard elliptic regularity result. Now, since
$\varphi(x')$---and consequently $f(\varphi(x'))$---
decays exponentially, the equation $\De \varphi(x')=-f(\varphi(x'))$
and local elliptic estimates \cite{Gilbarg-T} imply that the same is
true for the first order derivatives of $\varphi$. Differentiating the
equation and iterating the estimates a finite number of times,
one eventually obtains that all 
derivatives of $\varphi$  up to order $K+m+5$ decay exponentially,
proving the claim.

Finally, to verify hypothesis (ND) with $n=2$, we
take
\begin{equation*}
  \omega_1(s):=\sqrt{(\la+s)|\mu_0|}, \quad
  \omega_2(s):=\sqrt{(\la+s)|\mu_0|+1},
\end{equation*}
$\om(s):=(\om_1(s),\om_2(s))^T$,
and compute the determinant of the $2\times 2$ matrix
$\big[\,\om'(0)\ \  \om(0)\, \big]$:
\begin{equation*}
  \begin{aligned}
  \det \big[\,\om'(0)\ \  \om(0)\, \big]&=\frac{|\mu_0|}{2}
  \left(\frac{\sqrt{\la|\mu_0|+1}}{\sqrt{\la|\mu_0|}}-\frac{\sqrt{\la|\mu_0|}}{
   \sqrt{\la|\mu_0|+1}} \right)\\&=\frac{|\mu_0|}{2}
  \frac{1}{\sqrt{\la|\mu_0|(\la|\mu_0|+1)}}\ne 0.
  \end{aligned}
\end{equation*}
Hence, (ND) holds as well and we may now apply Theorem
\ref{thm:new} with $n=2$. 

Let $W\subset \R^2$ be as in the conclusion of Theorem
\ref{thm:new}. Thus for any  $\bar \om \in W$ there exist
$s\in (-\de,\de)$ and a solution  $v(x,y)$ of the equation 
\begin{equation*}
\Delta v+v_{yy}+a(x;s)v+
f_1(x,v;s)=0\quad (x\in \R^N,\ y\in\R),
\end{equation*}
such that \eqref{eq:decper} holds with $u$ replaced by $v$,
and $v(x,y)$ is radially symmetric
in $x'$, even  and $2\pi$-periodic in $x_N$, 
and quasiperiodic in $y$ with the frequency vector $\bar\om$.
By the definition of $a$ and $f_1$, 
$\tilde u=\varphi^{\la+s}+v$ is a solution of
\begin{equation*}
\Delta \tilde u+\tilde u_{yy}+(\la+s)f(\tilde u)=0\quad (x\in \R^N,\ y\in\R),
\end{equation*}
with the same properties as $v$.
Using the rescaling $u(x,y)=\tilde u(x(\la+s)^{-1/2},
y(\la+s)^{-1/2})$ we obtain a solution of the original equation
\eqref{eq:1} which satisfies \eqref{eq:decper}, and is radially symmetric
in $x'$, even  and $2\pi(\la+s)$-periodic in $x_N$, 
and quasiperiodic in $y$ with the frequency vector $(\la+s)\bar\om$
(obviously, any such vector is nonresonant, just as $\bar\om$). Since
no two vectors in (the uncountable set) $W$ are linearly dependent, the set of
frequency vectors  obtained this way is uncountable. 
So we have a family of solutions of \eqref{eq:1} with the desired
properties, we just need verify that they are all positive. 
This follows from  \eqref{eq:extendf}. Indeed, let $u$ be any of these
solutions. Since it is  quasiperiodic (in
the sense of our definition), it is not periodic in $y$ and in
particular $u\not \equiv 0$. By the strong maximum principle, either $u>0$
or $u$ is negative somewhere. In the latter case, 
quasiperiodicity and 
\eqref{eq:decper} imply that $u$ 
has a local negative minimum at some point.  But
at that point equation \eqref{eq:1} cannot be satisfied
when  \eqref{eq:extendf} holds. Thus  $u>0$.

The proof of Theorem \ref{thm:main} is now complete.

\bibliographystyle{amsplain} 

\begin{thebibliography}{10}

\bibitem{Agmon}
S.~Agmon, \emph{Lectures on exponential decay of solutions of second-order
  elliptic equations: bounds on eigenfunctions of {$N$}-body {S}chr\"odinger
  operators}, Mathematical Notes, vol.~29, Princeton University Press,
  Princeton, NJ; University of Tokyo Press, Tokyo, 1982.

\bibitem{Alessio-M:energy}
F.~Alessio and P.~Montecchiari, \emph{An energy constrained method for the
  existence of layered type solutions of {NLS} equations}, Ann. Inst. H.
  Poincar\'{e} Anal. Non Lin\'{e}aire \textbf{31} (2014), 725--749.

\bibitem{Alessio-M:mutliplicity}
\bysame, \emph{Multiplicity of layered solutions for {A}llen-{C}ahn systems
  with symmetric double well potential}, J. Differential Equations \textbf{257}
  (2014), 4572--4599.

\bibitem{Berestycki-L:existence}
H.~Berestycki and P.-L. Lions, \emph{Nonlinear scalar field equations. {I}.
  {E}xistence of a ground state}, Arch. Rational Mech. Anal. \textbf{82}
  (1983), 313--345.

\bibitem{Broer-H-N}
H.~W. Broer, J.~Hoo, and V.~Naudot, \emph{Normal linear stability of
  quasi-periodic tori}, J. Differential Equations \textbf{232} (2007),
  355--418.

\bibitem{Broer-H:reversible}
H.~W. Broer and G.~B. Huitema, \emph{Unfoldings of quasi-periodic tori in
  reversible systems}, J. Dynam. Differential Equations \textbf{7} (1995),
  191--212. \MR{1321710}

\bibitem{Broer-H-T}
H.~W. Broer, G.~B. Huitema, and F.~Takens, \emph{Unfoldings and bifurcations of
  quasi-periodic tori}, Mem. Amer. Math. Soc. \textbf{83} (1990), no.~421.

\bibitem{Busca-F}
J.~Busca and P.~Felmer, \emph{Qualitative properties of some bounded positive
  solutions to scalar field equations}, Calc. Var. Partial Differential
  Equations \textbf{13} (2001), 191--211.

\bibitem{Cabre-T:saddle}
X.~Cabr\'e and J.~Terra, \emph{Saddle-shaped solutions of bistable diffusion
  equations in all of {$\mathbb R^{2m}$}}, J. Eur. Math. Soc. (JEMS)
  \textbf{11} (2009), no.~4, 819--843.

\bibitem{Calsina-M-SM}
{\'A}.~Calsina, X.~Mora, and J.~Sol{\`a}-Morales, \emph{The dynamical approach
  to elliptic problems in cylindrical domains, and a study of their parabolic
  singular limit}, J. Differential Equations \textbf{102} (1993), no.~2,
  244--304.

\bibitem{Chen-L:uni}
C.-C. Chen and C.-S. Lin, \emph{{Uniqueness of the ground state solutions of
  ${\Delta} u+f(u)=0$ in ${\R}^n$, $n\ge 3$}}, Comm. Partial Differential
  Equations \textbf{16} (1991), 1549--1572.

\bibitem{Cortazar-E-F}
C.~Cort\'azar, M.~Elgueta, and P.~Felmer, \emph{{Uniqueness of positive
  solutions of ${\Delta} u+ f(u)=0$ in ${\R}^{N}$, ${N}\ge 3$}}, Arch. Rational
  Mech. Anal. \textbf{142} (1998), 127--141.

\bibitem{Dancer:uniqueness-sing}
E.~N. Dancer, \emph{On the uniqueness of the positive solution of a singularly
  perturbed problem}, Rocky Mountain J. Math. \textbf{25} (1995), 957--975.

\bibitem{Dancer:new}
\bysame, \emph{New solutions of equations on {$\mathbb R^n$}}, Ann. Scuola
  Norm. Sup. Pisa Cl. Sci. (4) \textbf{30} (2001), 535--563.

\bibitem{Dang-F-P:saddle}
H.~Dang, P.~C. Fife, and L.~A. Peletier, \emph{Saddle solutions of the bistable
  diffusion equation}, Z. Angew. Math. Phys. \textbf{43} (1992), no.~6,
  984--998.

\bibitem{Davila-dP-G}
J.~D{\'a}vila, M.~del Pino, and I.~Guerra, \emph{Non-uniqueness of positive
  ground states of non-linear {S}chr\"odinger equations}, Proc. Lond. Math.
  Soc. (3) \textbf{106} (2013), 318--344.

\bibitem{DlLlave:center}
R.~de~la Llave, \emph{A smooth center manifold theorem which applies to some
  ill-posed partial differential equations with unbounded nonlinearities}, J.
  Dynam. Differential Equations \textbf{21} (2009), no.~3, 371--415.

\bibitem{DlLlave-Sire:KAM}
R.~de~la Llave and Y.~Sire, \emph{An a posteriori {KAM} theorem for whiskered
  tori in {H}amiltonian partial differential equations with applications to
  some ill-posed equations}, Arch. Ration. Mech. Anal. \textbf{231} (2019),
  971--1044. \MR{3900818}

\bibitem{delPino-K-P-W2}
M.~del Pino, M.~Kowalczyk, F.~Pacard, and J.~Wei, \emph{Multiple-end solutions
  to the {A}llen-{C}ahn equation in {$\mathbb R^2$}}, J. Funct. Anal.
  \textbf{258} (2010), no.~2, 458--503.

\bibitem{delPino-K-P-W}
\bysame, \emph{The {T}oda system and multiple-end solutions of autonomous
  planar elliptic problems}, Adv. Math. \textbf{224} (2010), 1462--1516.

\bibitem{Farina-M-R}
A.~Farina, A.~Malchiodi, and M.~Rizzi, \emph{Symmetry properties of some
  solutions to some semilinear elliptic equations}, Ann. Sc. Norm. Super. Pisa
  Cl. Sci. (5) \textbf{16} (2016), 1209--1234.

\bibitem{Fiedler-S}
B.~Fiedler and B.~Sandstede, \emph{Dynamics of periodically forced parabolic
  equations on the circle}, Ergodic Theory Dynamical Systems \textbf{12}
  (1992), 559--571.

\bibitem{Fiedler-Sc:survey}
B.~Fiedler and A.~Scheel, \emph{Spatio-temporal dynamics of reaction-diffusion
  patterns}, Trends in nonlinear analysis, Springer, Berlin, 2003, pp.~23--152.

\bibitem{Gidas-N-N:unbd}
B.~Gidas, W.-M. Ni, and L.~Nirenberg, \emph{{Symmetry of positive solutions of
  nonlinear elliptic equations in ${\R}\sp{n}$}}, Mathematical analysis and
  applications, Part A, Academic Press, New York, 1981, pp.~369--402.

\bibitem{Gilbarg-T}
D.~Gilbarg and N.~S. Trudinger, \emph{Elliptic partial differential equations
  of second order}, Classics in Mathematics, Springer-Verlag, Berlin, 2001,
  Reprint of the 1998 edition.

\bibitem{Groves-W}
M.~D. Groves and E.~Wahl{\'e}n, \emph{Spatial dynamics methods for solitary
  gravity-capillary water waves with an arbitrary distribution of vorticity},
  SIAM J. Math. Anal. \textbf{39} (2007), no.~3, 932--964.

\bibitem{Gui-M-X}
C.~Gui, A.~Malchiodi, and H.~Xu, \emph{Axial symmetry of some steady state
  solutions to nonlinear {S}chr\"odinger equations}, Proc. Amer. Math. Soc.
  \textbf{139} (2011), 1023--1032.

\bibitem{Haragus-I:bk}
M.~Haragus and G.~Iooss, \emph{Local bifurcations, center manifolds, and normal
  forms in infinite-dimensional dynamical systems}, Springer, 2010.

\bibitem{Huitema:thesis}
G.B. Huitema, \emph{Unfoldings of quasi-periodic tori}, Ph.D.~thesis, University
  of Groningen, 1988.

\bibitem{Jeanjean-T}
L.~Jeanjean and K.~Tanaka, \emph{A remark on least energy solutions in {${\bf
  R}^N$}}, Proc. Amer. Math. Soc. \textbf{131} (2003), 2399--2408. \MR{1974637}

\bibitem{Kato:bk}
T.~Kato, \emph{Perturbation theory for linear operators}, Springer-Verlag,
  Berlin, 1966.

\bibitem{Kirchgassner}
K.~Kirchg\"{a}ssner, \emph{{Wave-solutions of reversible systems and
  applications.}}, J. Differ. Equations \textbf{45} (1982), 113--127 (English).

\bibitem{Kowalczyk-L-P-W}
M.~Kowalczyk, Y.~Liu, F.~Pacard, and J.~Wei, \emph{End-to-end construction for
  the {A}llen-{C}ahn equation in the plane}, Calc. Var. Partial Differential
  Equations \textbf{52} (2015), 281--302.

\bibitem{Kwong}
M.~K. Kwong, \emph{{Uniqueness of positive solutions of ${\Delta} u -u+u^p=0$
  in {\rm {R}$^n$}}}, Arch. Rational Mech. Anal. \textbf{105} (1989), 243--266.

\bibitem{Kwong-L}
M.~K. Kwong and Li. Y., \emph{Uniqueness of radial solutions of semilinear
  elliptic equations}, Trans. Amer. Math. Soc. \textbf{333} (1992), 339--363.

\bibitem{Malchiodi:new}
A.~Malchiodi, \emph{New classes of entire solutions for semilinear elliptic
  problems in {$\mathbb R^n$}}, Atti Accad. Naz. Lincei Cl. Sci. Fis. Mat.
  Natur. Rend. Lincei (9) Mat. Appl. \textbf{21} (2010), 33--45.

\bibitem{Mielke:bk}
A.~Mielke, \emph{{Hamiltonian and Lagrangian flows on center manifolds with
  applications to elliptic variational problems.}}, Springer-Verlag, Berlin,
  1991.

\bibitem{Mielke:grad}
A.~Mielke, \emph{Essential manifolds for an elliptic problem in infinite
  strip}, J. Differential Equations \textbf{110} (1994), 322--355.

\bibitem{Montecchiari-R}
P.~Montecchiari and P.~H. Rabinowitz, \emph{On the existence of
  multi-transition solutions for a class of elliptic systems}, Ann. Inst. H.
  Poincar\'e Anal. Non Lin\'eaire \textbf{33} (2016), no.~1, 199--219.
  \MR{3436431}

\bibitem{Moser:series}
J.~Moser, \emph{Convergent series expansions for quasiperiodic motions}, Math.
  Ann. \textbf{169} (1967), 136--176.

\bibitem{Peletier-S:uniq1}
L.~A. Peletier and J.~Serrin, \emph{{Uniqueness of nonnegative solutions of
  semilinear equations in {\rm {R}}$^n$}}, J. Differential Equations
  \textbf{61} (1986), 380--397.

\bibitem{Peterhof-S-S}
D.~Peterhof, B.~Sandstede, and A.~Scheel, \emph{Exponential dichotomies for
  solitary wave solutions of semilinear elliptic equations on infinite
  cylinders}, J. Diff. Eqns. \textbf{140} (1997), 266--308.

\bibitem{Pohozaev}
S.~I. Poho\v{z}aev, \emph{On the eigenfunctions of the equation {$\Delta
  u+\lambda f(u)=0$}}, Dokl. Akad. Nauk SSSR \textbf{165} (1965), 36--39.

\bibitem{P:morse}
P.~Pol\'a\v{c}ik, \emph{Morse indices and bifurcations of positive solutions of
  ${\Delta} u+f(u)=0$ on {$\R^{N}$}}, Indiana Univ. Math. J. \textbf{50}
  (2001), 1407--1432.

\bibitem{P:common}
\bysame, \emph{Some common asymptotic properties of semilinear parabolic,
  hyperbolic and elliptic equations}, Proc. Equadiff~10, Math. Bohemica
  \textbf{127} (2002), 301--310.

\bibitem{p-Valdebenito}
P.~Pol\'a\v{c}ik and D.~A. Valdebenito, \emph{Existence of quasiperiodic
  solutions of elliptic equations on {$\mathbb R^{N+1}$} via center manifold
  and {KAM} theorems}, J. Differential Equations \textbf{262} (2017),
  6109--6164.

\bibitem{p-Valdebenito:quadratic}
\bysame, \emph{Existence of quasiperiodic solutions of elliptic equations on
  {$\mathbb R^{N+1}$} with a quadratic nonlinearity}, Discrete Contin. Dyn.
  Syst. Ser. S \textbf{13} (2020), 1369--1393.

\bibitem{p-Valdebenito:hom}
\bysame, \emph{Existence of partially localized quasiperiodic solutions of
  homogeneous elliptic equations on {$\mathbb R^{N+1}$}}, Ann. Scuola Norm.
  Sup. Pisa Cl. Sci. (to appear).

\bibitem{Poschel:integrability}
J.~P\"{o}schel, \emph{Integrability of {Hamiltonian} systems on {Cantor} sets},
  Comm. Pure Appl. Math. \textbf{35} (1982), 653--695.

\bibitem{Poschel:lecture}
\bysame, \emph{A lecture on the classical {KAM} theorem}, Smooth ergodic theory
  and its applications ({S}eattle, {WA}, 1999), Proc. Sympos. Pure Math.,
  vol.~69, Amer. Math. Soc., Providence, RI, 2001, pp.~707--732. \MR{1858551}

\bibitem{Rabinowitz:reversible}
P.~H. Rabinowitz, \emph{On a class of reversible elliptic systems}, Netw.
  Heterog. Media \textbf{7} (2012), 927--939.

\bibitem{Reed-S:IV}
M.~Reed and B.~Simon, \emph{Methods of mathematical physics}, vol.~IV, Academic
  Press, New York, 1978.

\bibitem{Santra-W}
S.~Santra and J.~Wei, \emph{New entire positive solution for the nonlinear
  {S}chr\"odinger equation: coexistence of fronts and bumps}, Amer. J. Math.
  \textbf{135} (2013), 443--491.

\bibitem{Scheurle:strip}
J.~Scheurle, \emph{Quasiperiodic solutions of a semilinear equation in a
  two-dimensional strip}, Dynamical problems in mathematical physics
  ({O}berwolfach, 1982), Methoden Verfahren Math. Phys., vol.~26, Peter Lang,
  Frankfurt am Main, 1983, pp.~201--223.

\bibitem{Scheurle:ode}
\bysame, \emph{Bifurcation of quasiperiodic solutions from equilibrium points
  of reversible dynamical systems}, Arch. Rational Mech. Anal. \textbf{97}
  (1987), 103--139.

\bibitem{Serrin-T}
J.~Serrin and M.~Tang, \emph{Uniqueness of ground states for quasilinear
  elliptic equations}, Indiana Univ. Math. J. \textbf{49} (2000), 897--923.

\bibitem{Sevryuk:KAM-stable}
M.~B. Sevryuk, \emph{K{AM}-stable {H}amiltonians}, J. Dynam. Control Systems
  \textbf{1} (1995), 351--366.

\bibitem{Valls:quasi}
C.~Valls, \emph{Existence of quasi-periodic solutions for elliptic equations on
  a cylindrical domain}, Comment. Math. Helv. \textbf{81} (2006), no.~4,
  783--800.

\bibitem{Vander-I}
A.~Vanderbauwhede and G.~Iooss, \emph{Center manifold theory in infinite
  dimensions}, Dynamics reported: expositions in dynamical systems,
  Springer-Verlag, Berlin, 1992, pp.~125--163.

\end{thebibliography}
\providecommand{\bysame}{\leavevmode\hbox to3em{\hrulefill}\thinspace}
\providecommand{\MR}{\relax\ifhmode\unskip\space\fi MR }
\providecommand{\MRhref}[2]{%
  \href{http://www.ams.org/mathscinet-getitem?mr=#1}{#2}
}
\providecommand{\href}[2]{#2}

\end{document}